\newcommand{\C}{{\mathbb C}}
\newcommand{\N}{{\mathbb N}}
\newcommand{\R}{{\mathbb R}}
\newcommand{\Z}{{\mathbb Z}}
\newcommand{\fa}{{\mathfrak a}}
\newcommand{\fd}{{\mathfrak d}}
\newcommand{\fm}{{\mathfrak m}}
\newcommand{\cH}{{\mathcal H}}
\newcommand{\cL}{{\mathcal L}}
\newcommand{\scrB}{\mathscr{B}}
\newcommand{\scrS}{\mathscr{S}}
\newcommand{\ga}{\alpha}
\newcommand{\gb}{\beta}
\renewcommand{\gg}{\gamma}
\newcommand{\gG}{\Gamma}
\newcommand{\gd}{\delta}
\newcommand{\gve}{\varepsilon}
\newcommand{\gk}{\varkappa}
\newcommand{\gl}{\lambda}
\newcommand{\go}{\omega}
\newcommand{\gvp}{\varphi}
\newcommand{\gt}{\theta}
\DeclareMathOperator{\ctg}{ctg}
\DeclareMathOperator{\tg}{tg}
\DeclareMathOperator{\diag}{diag}
\DeclareMathOperator{\im}{Im}
\DeclareMathOperator\Mp{Mp}
\DeclareMathOperator{\op}{op} 
\DeclareMathOperator{\ord}{ord}
\DeclareMathOperator\Sp{Sp}
\DeclareMathOperator{\re}{Re}
\DeclareMathOperator{\res}{res}
\DeclareMathOperator{\sgn}{sgn}
\DeclareMathOperator{\Tr}{Tr} 
\DeclareMathOperator{\tr}{tr}
\DeclareMathOperator{\U}{U}
\newtheorem{theorem}{Theorem}[section]
\newtheorem{lemma}[theorem]{Lemma}
\newtheorem{proposition}[theorem]{Proposition}
\theoremstyle{definition}
\newtheorem{remark}[theorem]{Remark}
\newtheorem{definition}[theorem]{Definition}
\newtheorem{example}[theorem]{Example}
\def\skp#1{\langle#1\rangle}
\def\mat#1{\begin{pmatrix}#1\end{pmatrix}}
\def\Res{\mathrm{Res}_{z=0}}
\renewcommand{\labelenumi}{(\roman{enumi})}
\numberwithin{equation}{section}
\begin{document}

\title[Trace Expansions and Equivariant Traces]{Trace Expansions and Equivariant Traces on  an \\ Algebra of Fourier Integral Operators on $\R^n$}

\author{Anton Savin}
\address{A. Savin. Peoples' Friendship University of Russia (RUDN University),
6 Miklukho-Maklaya St, Moscow, 117198, Russia}
\email{a.yu.savin@gmail.com} 

\author{Elmar Schrohe} 
\address{E. Schrohe. Leibniz University Hannover, Institute of Analysis, Welfengarten 1, 30167 Hannover, Germany }
\email{schrohe@math.uni-hannover.de} 
\date{}

\thanks{ES gratefully acknowledges the support of Deutsche Forschungsgemeinschaft through grant SCHR 319/10-1.  AS is grateful for support to the Russian Foundation for Basic Research, project Nr.~21-51-12006.}

\subjclass[2010]{58J40, 58J42} 

\date{}

\begin{abstract}
We consider the operator algebra $\mathscr A$ on $\mathscr S(\mathbb R^n)$ generated by the Shubin type pseudodifferential operators, the Heisenberg-Weyl operators and the lifts of the unitary operators on $\mathbb C^n$ to metaplectic operators. 

With the help of an auxiliary operator in the Shubin calculus, we find trace expansions for these operators in the spirit of Grubb and Seeley. Moreover, we can define a noncommutative residue generalizing that  for the Shubin pseudodifferential operators and obtain a class of localized equivariant traces on the algebra.
\end{abstract}

\maketitle
\tableofcontents

\section{Introduction}
On the space $\mathscr S(\R^n)$ of Schwartz functions on $\R^n$ we study the algebra 
 $\mathscr A$ of all operators 
\begin{eqnarray}
\label{eq_D}
D= \sum R_{g} T_{w} A,
\end{eqnarray}
where the sum is finite, the operators $A$ are classical Shubin type pseudodifferential operators of integer order, the operators $T_w$, $w\in \C^n$,  are Heisenberg-Weyl operators, acting on $u\in \scrS(\R^n)$ by
$$T_w u(x)  = e^{ikx-iak/2} u(x-a), \quad w=a-ik,$$
and finally the operators $R_g$, $g\in \Sp(n)\cap \mathrm O(2n) \cong \U(n)$ are lifts of unitary operators on $\C^n\cong T^*\R^n$ to elements of the complex metaplectic group on $L^2(\R^n)$. More details will be given, below. 
We have denoted by $\Sp(n)$ and $\mathrm O(2n)$ the groups of symplectic and orthogonal matrices, respectively, on $T^*\R^n\cong\R^n\times \R^n$ and identified $T^*\R^n$ with $\C^n$ via $(x,p)\mapsto p-ix$.  

The group $\Mp(n)$ of metaplectic operators on $L^2(\R^n)$ is generated by the operators $S_q=e^{i\op^w q}$, where $\op^wq$ is the Weyl quantization of a homogeneous quadratic form $q=q(x,p)$ on $\mathbb \R^n\times\R^n$. 
The operator $S_q$ defines a symplectic map on $T^*\R^n$ by evaluating the Hamiltonian flow generated by the function $q$ at time $1$. In fact, this yields a surjection 
$\pi: \Mp(n)\to \Sp(n)$.
It is well-known that this is a double covering. 
For more details on the metaplectic group we refer to Leray \cite{Ler8} and de Gosson \cite{deG1} as well as to \cite{SaSch2}. 

The complex metaplectic group $\Mp^c(n)$  is generated by the operators $S_{q,\phi}=e^{i(\op^w q+\phi)}$  with $q$ as before and $\phi\in \R$. As shown in \cite[Proposition 1]{SaSch2}, the map 
$$R: \U(n)\to \Mp^c(n),  g \mapsto R_g = \pi^{-1}(g) \sqrt{\det g},$$
first defined for $g$ close to the identity $g=I\in \U(n)$ with the choices $\pi^{-1} (I)=I$ and $\sqrt{\det I}=1$, extends continuously to all of $\U(n)$.  These are the operators used above. The metaplectic operators form a special class of Fourier integral operators, compatible with the Shubin pseudodifferential operators in the sense that, for a Shubin symbol $a$ and a metaplectic operator $S$,    
$$S^{-1} \op^w(a) S = \op^w(a\circ \pi(S)). $$

This algebra has been studied before in \cite{SaSch3}, where explicit formulae for the Chern-Connes character in the local index formula of Connes and Moscovici \cite{CoMo1} were derived and applications to noncommutative tori and toric orbifolds were given. 
The subalgebra without the Heisenberg-Weyl operators has been used in \cite{SaSch2} in the context of index theory. 
The main point of considering these algebras lies in the chance of being able to perform concrete computations, in particular in connection with index theory for operator algebras associated with groups of quantized canonical transformations as they were considered in \cite{SaSchSt4}, \cite{SaSch1} by the authors and B. Sternin or in  \cite{GKN} by Gorokhovsky, de Kleijn and Nest. 

In the present article, we are interested in trace expansions. 
We choose an elliptic auxiliary operator $H$ of positive order in the Shubin calculus (a good choice would be the harmonic oscillator $H_0=\frac12(|x|^2-\Delta)$), for which the resolvent $(H-\gl)^{-1}$ exists for large $\gl$ in a sector about the negative real axis. 
Given an operator $D=R_gT_wA$ as above, the operator $D(H-\lambda)^{-K}$ will be of trace class for sufficiently large $K$. Moreover, the trace will be a holomorphic function of $\lambda$. We will show that, as $\gl\to \infty$ in the sector, we have an expansion  
\begin{eqnarray}\label{resolvent_exp}
\Tr(R_gT_wA(H-\lambda)^{-K}) 
&\sim& \sum_{j=0}^\infty c_j (-\lambda)^{(2m+\ord A-j)/\ord H-K} \\
&&+\sum_{j=0}^\infty (c_j' \ln (-\lambda)+c_j'')(-\lambda)^{-j-K}\nonumber
\end{eqnarray}
with suitable coefficients $c_j, c_j', c_j''$, $j=0,1,\ldots$. Here, $\ord A $ and $\ord H$ denote the orders of $A$ and $H$, respectively, and $m$ is the complex dimension of the fixed point set of $g$. 
In this expansion, the coefficients $c_j$ and $c_j'$ are `local' in the sense that they can be determined from the terms in the asymptotic expansion of the symbols of $D$ and $H$, respectively. They can therefore -- at least in principle -- be computed explicitly,  while the $c_j''$ are 'global'; they depend also on the residual part of the symbol  and hence cannot be determined in general. 
This can be seen as a generalization of the corresponding result by Grubb and Seeley \cite[Theorem 2.7]{GS1}.  

The derivation of this expansion is the heart of the analysis. We obtain from it two other results. The first  concerns the operator zeta function $\zeta_{R_gT_wA}(z) = \Tr(R_gT_wAH^{-z})$. Here, we additionally assume 
$(H-\gl)^{-1}$ to exist in the closed sector, including $\gl=0$. 
The zeta function then is defined  and  holomorphic for $\re z>(2n+\ord A)/\ord H$. Moreover, it has a meromorphic  extension to the whole complex plane with at most simple poles. In fact, we have the pole structure 
\begin{eqnarray}\label{trace_exp}
\lefteqn{\Gamma(z)\zeta_{R_gT_wA}(z)}\\
&\sim &\sum_{j=0}^\infty \frac{\tilde c_j}{z-(2m+\ord A-j)/\ord H}
+ \sum_{j=0}^\infty \Big(\frac{\tilde c_j'}{(z+j)^2} + \frac{\tilde c_j''}{z+j}\Big)
\nonumber
\end{eqnarray}
with suitable coefficients $\tilde c_j, \tilde c_j'$ and $\tilde c_j''$ related to those above by universal constants. 
The analysis of such operator zeta functions goes back to Seeley's classical article on complex powers of an elliptic operator \cite{Seeley67}, where $\Tr(H^{-z})$  was studied for an elliptic pseudodifferential operator $H$. 
Grubb and Seeley in \cite{GS2} also showed how this expansion can be derived from the expansion of the resolvent trace under rather mild conditions. 

Assuming, moreover, that the sector in which the resolvent is holomorphic is larger than the left half plane, we can study the `heat trace'  $\Tr\big(R_gT_wAe^{-tH}\big)$. It exists for all $t>0$ and has an expansion 
\begin{eqnarray}
\label{exp_exp}
\mbox{\;\;\;}\Tr( R_gT_wAe^{-tH})  \sim \sum_{j=0}^\infty \tilde c_j t^{(j-2m-\ord A)/\ord H} + 
\sum_{j=0}^\infty (-\tilde c'_j\ln t + c_j'')t^j.
\end{eqnarray}
Note that the coefficients coincide with those in \eqref{trace_exp}.
Just as the result on operator zeta functions, this statement is a consequence of the resolvent expansion together with abstract arguments. 

The coefficients $c_0'$ and $\tilde c_0'$ play a remarkable role in these expansions. 
For one thing, they are the same; moreover, after multiplication by $\ord H$ they are completely independent of the auxiliary operator $H$. 
In fact they allow us to define an analog of the noncommutative residue for our algebra. 
Restricted to the Shubin pseudodifferential operators it coincides with the residue introduced by 
Boggiatto and Nicola \cite{BoNi} for a more general class of anisotropic pseudodifferential operators on $\R^n$. 

We recall that the noncommutative residue for the algebra of classical pseudodifferential operators on a closed manifold, discovered independently by Wodzicki \cite{Wod2}  and Guillemin \cite{Gu} defines the unique trace -- up to multiples -- on this algebra (see e.g.~\cite{FGLS} for a simple proof). The same is true for the residue of Boggiatto and Nicola. 

Our algebra $\mathscr A$ is the algebraic twisted crossed product of the algebra $\Psi$ of all Shubin type pseudodifferential operators with the group $\C^n\rtimes \U(n)$ via the representations of $\C^n$ by the operators $T_w$ and of $\U(n)$ by the operators $R_g$. Given a discrete subgroup
$G$ of $\C^n\rtimes \U(n)$ and an element $(w_0,g_0)$ of $G$, we define the residue trace 
of the operator $D=\sum R_gT_wA$  localized at the conjugacy class $\skp{(w_0,g_0)}$ by 
\begin{eqnarray}
\label{eq_res}
\res_{\skp{(w_0,g_0)}} D = \ord H\sum_{(w,g)\in \skp{(w_0,g_0)}} c_0'(R_gT_wA)
\end{eqnarray}
and show that these are actually traces on the algebra $\mathscr A$. 

The present article in this sense  continues the work of Dave \cite{D1}. He defined  equivariant noncommutative residues for the algebra $\Psi_{\rm cl}(M)\rtimes \gG$ of operators on a closed manifold $M$, generated by the classical pseudodifferential operators and a finite group $\gG$ of diffeomorphisms of $M$ and computed the cyclic homology in terms of the de Rham cohomology of the fixed point manifolds $S^*M^g$. 
In the same vein, Perrot \cite{Perrot14} studied the local index theory for shift operators associated to non-proper and non-isometric actions of Lie groupoids; he also computed the residue of the corresponding operator zeta functions in zero.   

{\bf Organisation of the article.} 
We recall in Section \ref{sect_algebra} the essential facts about the operator classes. Section \ref{main} contains the statements of the main results: Theorem \ref{thm_resolvent} on the resolvent expansion, Theorem \ref{thm_zeta} on the structure of the singularities of the operator zeta function, Theorem \ref{thm_exp} on the `heat trace' expansion and   Theorem \ref{residue} on the form of the noncommutative residue. Theorems \ref{thm_zeta} and \ref{thm_exp}  follow from 
Theorem \ref{thm_resolvent} by  abstract results in \cite{GS2}.  
Section \ref{sect_resolvent}  contains a sketch of the proof of the resolvent expansion, while Section \ref{sect_residue} treats the residue; full details will be given elsewhere. 
The appendix contains the essential material on the weakly parametric calculus of  Grubb and Seeley.

\section{The Operators}\label{sect_algebra}

\subsection*{Shubin type pseudodifferential operators}
A smooth function $a$ on $\R^n\times \R^n$ is called a (Shubin type)  pseudodifferential 
symbol of order $\fa\in \R$, provided that, for all multi-indices $\ga$, $\gb$, it 
satisfies the estimates 
$$D^\ga_pD^\gb_x a( x,p) \lesssim (1+|x|^2+|p|^2)^{(\fa-|\ga|-|\gb|)/2}, \quad(x,p)\in \R^n\times \R^n.$$
From such a symbol, we can define the associated pseudodifferential operators $\op a$ and 
$\op^w a$, respectively,  by standard and Weyl quantization.

By $\Psi^\fa$ we denote the space of all Shubin type pseudodifferential operators of order $\fa$ and by $\Psi$ the union over all $\fa\in \R$.
An important example is the operator
\begin{eqnarray}\label{def_ham}%
H_0 = \frac12(|x|^2-\Delta) \in \Psi^2,  
\end{eqnarray}
the so-called harmonic oscillator, which is selfadjoint and positive on $L^2(\R^n)$. 

These operators act on the Sobolev spaces $\mathcal H^s(\R^n)$, $s\in \R$, consisting of all tempered distributions $u$ such that $\op((1+|x|^2+|p|^2)^{s/2})u\in L^2(\R^n)$:  In fact, $\Psi^\fa\hookrightarrow \scrB(\cH^s(\R^n),\cH^{s-\fa}(\R^n)) $ for all  $s,\fa\in \R$. In particular, zero order  operators 
are bounded on $L^2(\R^n)$ and those of  order $<-2n$ are of trace class.

We will work here mostly with classical symbols, i.e.~symbols $a$ of order $\fa\in \Z$ that have an asymptotic expansion 
$$a \sim \sum_{j=0}^\infty a_{\fa-j}  $$
with $a_{\fa-j}$ smooth and positively homogeneous in $(x,p)$ of degree $\fa-j$ for 
$|(x,p)|\ge 1$. We call $a_\fa$ the principal symbol and write $\Psi^\fa_{\rm cl} $ 
and $\Psi_{\rm cl}$ for the classes of operators with classical symbols, respectively. 

A Egorov type theorem holds: 
Given an element $S$ in the metaplectic group $\Mp(n)$ and 
$A={\rm op}^w a$ a Weyl-quantized classical Shubin  pseudodifferential operator  with symbol $a$, then $S^{-1}AS$
is the Weyl-quantized Shubin  pseudodifferential operator with symbol $a\circ \pi(S)$, where $\pi(S)\in \Sp(n)$ is the canonical transformation associated with $S$, see 
\cite[Theorem 7.13]{deG1}. As the principal symbol of  ${\rm op}^w(a)$ coincides with that of ${\rm op}(a)$, we find the following relation for principal symbols: 
\begin{eqnarray}\label{Egorov}%
\sigma (S^{-1}AS) = \sigma(A)\circ \pi(S).
\end{eqnarray}


The above operators need not be scalar, it is often important to also consider systems of operators  over $\R^n$, see e.g. \cite{SaSch3}.

\subsection*{Heisenberg-Weyl operators.} For  $w=a-ik\in\mathbb{C}^n$,  $a,k\in \mathbb{R}^n$, we define 
\begin{eqnarray}
T_w u(x)=e^{ikx-iak/2}u(x-a).
\end{eqnarray}
We note that for $v,w\in \C^n$
\begin{equation}
\label{eq-comm1}
T_{v}T_{w}=e^{-i\im( v,w)/2}T_{v+w},\quad \text{ where } (v,w)=v\overline{w}.
\end{equation}

\subsection*{Metaplectic operators.} 
Since $\U(n)$ is generated by ${\rm O}(n)$ and $\U(1)$, see~\cite[Lemma 1]{SaSch2}, we can define a 
unitary representation $g\mapsto R_g$ of $\U(n)$ by complex metaplectic operators 
$R_g\in \U(L^2(\R^n))$  by the following two assignments:
\begin{enumerate}
\item $R_gu(x)=u(g^{-1}x)$, if $g\in {\rm O}(n)\subset \U(n)$. Then  $R_g$  is a so-called shift operator;
\item $R_gu(x)=e^{i\varphi(1/2-H_1)}u(x)$, where $g={\rm diag}(e^{i\varphi},1,\ldots,1),$ and $H_1=\frac12(x_1^2-\partial_{x_1}^2)$ is the one-dimensional  harmonic oscillator. 
$R_g$ is a so-called fractional Fourier transform with respect to $x_1$. According to Mehler's formula \cite[Section 4]{H95} it is given by 
\begin{eqnarray*}\lefteqn{R_gu(x)
=\sqrt{\frac{1-i\ctg \varphi}{2\pi}}}\\
&&\times \int 
   \exp\left(
        i\left(
         (x_1^2+y_1^2)\frac{\ctg \varphi}2-\frac{x_1y_1}{\sin\varphi} 
        \right) 
       \right) u(y_1,x_2,...,x_n)dy_1.
\end{eqnarray*}
\end{enumerate} 
 Here we choose the square root so that the real part is positive. 

In (ii) we may  assume that $0<\gvp<2\pi$, $\gvp\not=\pi$, as the cases $\gvp=0,\pi$ are covered by (i): For $\gvp=0$,  $R_g$ is the identity, and for $\gvp=\pi$,  it is the reflection in the first component. Moreover,  for $g=(\exp(i\pi/2), 1,\ldots, 1)$,  $R_g$ coincides with the Fourier transform in the first variable, see  the discussion on p.\,427 in \cite{H95}. 
By Theorem~7.13 in~\cite{deG1}
\begin{equation} 
\label{cr12}
R_g T_w R_g^{-1}=  T_{g w}.
\end{equation}

%
%

\section{Main Results}\label{main}

Let $E$ be a (trivial) vector bundle over $\R^n$ and  $h$ a classical Shubin symbol of order $\fm>0$ whose principal symbol is positive and scalar, i.e.~a multiple of $id_E$, positively homogeneous of degree $\fm$ for $|x|^2+|p|^2\ge 1$. Denote by $H$ the associated pseudodifferential operator acting on sections of $E$. 


For  $0<\delta\le\pi$ write $S_{\delta}$ for the sector 
\begin{eqnarray}\label{sector}%
S_{\delta} = \{\lambda\in \mathbb C\setminus \{0\}\mid  |\arg\lambda - \pi |<\delta\}
\end{eqnarray}
and  $U_r(0)$ for the disk of radius  $r>0$ about $0\in \C$. 
Standard pseudodifferential techniques show: 

\begin{proposition}\label{prop_resolvent}
For every $\gd<\pi$,
$H-\lambda$ is invertible for all $\lambda \in S_\gd$ with  $|\lambda| $ sufficiently large.
The function $\lambda\mapsto (H-\gl)^{-1}$ is meromorphic in $S_\gd\cup U_r(0)$ for suitably small
$r>0$.
In particular, we may assume that $H-\lambda$ is invertible for all 
$\gl \in S_\gd$ after replacing $H$ by $H+c$ for suitable $c>0$.  
\end{proposition}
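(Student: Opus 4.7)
The plan is to treat $H-\lambda$ as a parameter-dependent elliptic operator in the Shubin calculus, construct a parametrix for large $|\lambda|$ in the sector, and then invoke the compactness of the resolvent to upgrade invertibility at infinity to meromorphy on all of $S_\delta\cup U_r(0)$.

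First I would check parameter-dependent ellipticity. Since the principal symbol $h_{\fm}$ is positive and positively homogeneous of degree $\fm>0$ for $|(x,p)|\ge 1$, its range on $\{|(x,p)|\ge 1\}$ is contained in a ray $[c_0,\infty)\cdot\mathrm{id}_E$ with $c_0>0$. For any $\delta<\pi$, the sector $S_\delta$ is separated from $[0,\infty)$, so there exists $c_\delta>0$ such that
\[
|h_{\fm}(x,p)-\lambda|\ge c_\delta\bigl(|(x,p)|^{\fm}+|\lambda|\bigr)
\]
for all $|(x,p)|\ge 1$ and $\lambda\in S_\delta$. This is exactly the parameter-dependent ellipticity condition needed for the Shubin/weakly parametric calculus reviewed in the appendix, with $\lambda$ playing the role of the spectral parameter.

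Next I would build an approximate inverse $Q(\lambda)=\op^w q(\lambda)$ by the usual iterative symbol construction, inverting the principal symbol on $|(x,p)|\ge 1$ (or using a suitable cut-off near the origin) and then correcting the lower-order terms so that
\[
(H-\lambda)Q(\lambda)=I-R(\lambda),
\]
where $R(\lambda)$ has a symbol whose Shubin seminorms decay like $\langle\lambda\rangle^{-N}$ for any $N$, and hence $\|R(\lambda)\|_{\scrB(L^2)}\to 0$ as $|\lambda|\to\infty$ in $S_\delta$. Then for $|\lambda|$ large, $I-R(\lambda)$ is invertible by Neumann series and
\[
(H-\lambda)^{-1}=Q(\lambda)(I-R(\lambda))^{-1}
\]
exists and depends holomorphically on $\lambda$.

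For the meromorphy on $S_\delta\cup U_r(0)$, I would use that $H$ is elliptic of positive order in the Shubin calculus, so $(H-\lambda_0)^{-1}$ is compact on $L^2(\R^n)$ at any regular value $\lambda_0$. The analytic Fredholm theorem applied on the connected open set $S_\delta\cup U_r(0)$ (for $r$ small enough that this set is connected) then yields that $\lambda\mapsto(H-\lambda)^{-1}$ is meromorphic there, with poles of finite rank; the previous step guarantees that this set of poles does not accumulate in $S_\delta$. Finally, the last assertion follows immediately: since the poles are discrete and contained in a fixed bounded neighbourhood of $0$ inside $S_\delta$, shifting $H\mapsto H+c$ for $c>0$ large enough translates them out of $S_\delta$, yielding invertibility of $H+c-\lambda$ on the whole sector.

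The main obstacle I anticipate is bookkeeping in the Shubin symbol construction: one has to verify that the symbolic inversion respects the joint $(x,p,\lambda)$ symbol estimates of the parametric calculus uniformly in $\lambda\in S_\delta$, so that the remainder $R(\lambda)$ genuinely has operator norm vanishing as $|\lambda|\to\infty$. The rest (analytic Fredholm, Neumann series, shift by $c$) is soft and standard once this ellipticity with parameter is in place.
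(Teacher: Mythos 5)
Your proposal is correct and is precisely the route the paper leaves to ``standard pseudodifferential techniques'': parameter-dependent ellipticity of $h_{\fm}-\lambda$ in the sector, a parametrix whose remainder vanishes in operator norm as $|\lambda|\to\infty$ (the parametric calculus recalled in the appendix, cf. Example \ref{ex.1} and Theorem \ref{parametrix}), followed by compactness of the resolvent and the analytic Fredholm theorem to obtain meromorphy and to justify the shift $H\mapsto H+c$. I see no gap; at most one could spell out that adding $c>0$ cannot move spectrum from outside the sector into $S_\gd$ (since $S_\gd - c\subset S_\gd$), which is immediate.
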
 

We shall establish the following results:

\begin{theorem}\label{thm_resolvent} 
Fix $g\in \U(n)$, $w\in \C^n$, and a pseudodifferential operator $A\in \Psi$ of order $\ord A$. 
Choose $K\in \N$ so large that $-K\ord H +\ord A<-2n$ and fix $\gd<\pi$. Then 
\begin{enumerate}\renewcommand{\labelenumi}{(\alph{enumi})} 
\item The operator $R_gT_w A (H-\lambda)^{-K} $ is of trace class for all $\lambda$ in  $S_\gd$, $|\gl|$ sufficiently large. 

\item  The function $\lambda \mapsto \Tr(R_gT_w A (H-\lambda)^{-K}) $ is meromorphic in $S_\gd$  and near $\gl=0$. It is $O(|\gl|^{-\gve})$ with suitable $\gve>0$ for $\gl\to \infty$ in $S_\gd$. 

\item As $\gl\to \infty$ in the sector $S_\gd$ we have the expansion \eqref{resolvent_exp}.  

\item If the fixed point set of the affine mapping $\mathbb{C}^n\to\mathbb{C}^n, v\mapsto gv+w$ is empty, then the trace is $O(\gl^{-\infty})$. 

\item The coefficient $c_0'$ is independent of the choice of $H$ (subject to the properties stated above)  when multiplied by the factor $\ord H$.
\end{enumerate} 
\end{theorem}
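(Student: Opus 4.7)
The plan is to extract a local symbolic formula for $c_0'$ from the proof of~\eqref{resolvent_exp} in Section~\ref{sect_resolvent} and to observe that the auxiliary operator $H$ enters this formula only through the multiplicative factor $1/\ord H$, so that the product $\ord H\cdot c_0'$ depends solely on $R_g T_w A$ and on the geometry of the fixed point set.

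First, I would trace through the argument establishing~\eqref{resolvent_exp}. Using the weakly parametric calculus of Grubb and Seeley recalled in the appendix, the trace $\Tr(R_gT_wA(H-\lambda)^{-K})$ reduces, modulo a term that is $O(|\lambda|^{-\infty})$ and accounts for part (d), to an oscillatory integral localized about the fixed point set $F$ of $v\mapsto gv+w$ in $\C^n\cong\R^{2n}$, which has real dimension $2m$. Introducing polar coordinates $(r,\omega)$ in the $(2n-2m)$-dimensional normal directions, the resolvent symbol of $(H-\lambda)^{-K}$ contributes to leading order the factor $(h_\fm(\omega)r^{\fm}-\lambda)^{-K}$, where $h_\fm$ is the principal symbol of $H$ and $\fm=\ord H$.

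Second, the substitution $\rho = h_\fm(\omega)\,r^{\fm}$ in the radial integral supplies a Jacobian factor $1/\fm$. The $\ln(-\lambda)(-\lambda)^{-K}$ contribution is produced by the Mellin pole of $\int_0^\infty(\rho-\lambda)^{-K}\rho^{s-1}\,d\rho$ at the value of $s$ dictated by the $j=0$ homogeneity counting; both this pole and the associated angular integral of the relevant symbolic data of $R_gT_wA$ are independent of $h_\fm$. Collecting factors gives
\begin{equation*}
\ord H\cdot c_0'\;=\;\mathrm{res}_{\mathrm{loc}}(R_gT_wA),
\end{equation*}
a local symbolic integral over (a sphere bundle in) $F$ depending only on the homogeneous components of the symbol of $R_gT_wA$. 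This is exactly the quantity appearing in~\eqref{eq_res}.

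The main obstacle will be to separate cleanly the $c_0'\ln(-\lambda)$ and $c_0''$ contributions in the $j=0$ term after the substitution $\rho=h_\fm(\omega)r^{\fm}$: a priori the change of variables mixes them, and lower-order terms in the symbol of $H$ (and even its residual part) contribute non-trivially to $c_0''$, which is precisely why $c_0''$ is declared \emph{global} in the introduction. The resolution lies in the structure of the weakly parametric calculus, in which the functional extracting the logarithmic coefficient is intrinsic and factors through the top-order symbolic data, while the polynomial/holomorphic part absorbs all $H$-dependent corrections. Isolating this clean split is, I expect, the heart of the verification of~(e).
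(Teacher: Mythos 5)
Your plan engages only with parts (c)--(e), and for (c) the reduction you assert -- that the weakly parametric calculus reduces the trace, modulo $O(|\lambda|^{-\infty})$, to an oscillatory integral ``localized about the fixed point set'' whose resolvent factor is $(h_\fm(\omega)r^{\fm}-\lambda)^{-K}$ in polar coordinates on the normal directions -- is not correct as stated and skips the actual work. The principal symbol $h_\fm$ depends on \emph{all} of $(x,p)$, including the directions along the fixed point set, so it cannot be written as $h_\fm(\omega)r^{\fm}$ with $\omega$ a normal angular variable; and the localization is not an automatic feature of the calculus: in the paper it is the content of Proposition \ref{prop_trace}, an explicit computation of the kernel of $R_gT_w$ via Mehler's formula followed by an affine change of variables exhibiting a nondegenerate quadratic phase $\sum_j\tilde\lambda_j w_j^2$ in the non-fixed directions (this also yields (d)). Polar coordinates are then taken in \emph{all} integration variables, the sphere integral is handled by stationary phase, and what one first obtains is an expansion containing terms $e^{i|\mu|^2\mu_l}g(\mu/|\mu|)$ with $\mu_l\neq0$ and powers of $|\mu|$; a separate argument (Step 6, in the spirit of \cite[Lemma 2.3]{GS1}, cf.\ \cite{SaSch3}) is needed to show that these spurious terms cancel so that \eqref{resolvent_exp} holds with genuine powers of $-\lambda$ and $\ln(-\lambda)$. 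Your proposal never confronts this cancellation, and without it (c) is not established.

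For (e) you correctly name the danger -- lower-order and residual parts of $H$ could contaminate the coefficient of $(-\lambda)^{-K}\ln(-\lambda)$ -- but your proposed resolution (``the functional extracting the logarithmic coefficient is intrinsic and factors through the top-order symbolic data'' by ``the structure of the weakly parametric calculus'') is exactly the assertion to be proved, not an argument: the Mellin-pole/Jacobian computation only shows that the \emph{leading} parametrix term contributes $\tfrac1\fm\int_{\mathbb S^{2m_5-1}}\tr_E a_{-2m_5}\,dS$ up to constants, not that no other term produces a logarithm at the power $(-\lambda)^{-K}$. The paper closes this gap structurally: every parametrix term is of the form $d\cdot(h_\fm+\mu^\fm)^{-K-\ell}$ with $d$ homogeneous, the expansion in powers $h_\fm^k\mu^{-\fm k}$ together with the homogeneity count \eqref{homog_cdn} forces $k=\ell=0$ for the $\mu^{-K\fm}\ln|\mu|$ coefficient, and Proposition \ref{4.2}(b),(d) shows that $\ell=0$ terms involve only (Taylor-shifted) homogeneous components of $a$, with no derivative falling on the resolvent factor; hence neither $h_{\fm-j}$ nor powers of $h_\fm$ enter, and the factor $1/\ord H$ comes from $\ln(-\lambda)=\fm\ln\mu$ rather than from a Jacobian. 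Without an argument of this kind, part (e) -- and with it the well-definedness of \eqref{eq_res} -- remains open in your write-up.
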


A sketch of the proof of Theorem \ref{thm_resolvent} will be given in Section \ref{sect_resolvent}, below. 
Statements (a) and (b) follow immediately from Proposition \ref{prop_resolvent} and the fact that the embedding $\cH^{2n+\gve}(\R^n) \hookrightarrow L^2(\R^n)$ 
is trace class for every $\gve>0$. Deriving the expansion \eqref{resolvent_exp} in (c) is the crucial task. 
Statements (d) and (e) will be proven along the way of showing (c). 

As a consequence, we obtain two more results. Theorems  \ref{thm_zeta} and \ref{thm_exp} follow from Theorem \ref{thm_resolvent} with the help of 
Propositions 2.9 and 5.1 in \cite{GS2}.

\begin{theorem}\label{thm_zeta}With the notation of Theorem {\rm \ref{thm_resolvent}}
assume, moreover,  that  $H-\gl$ is invertible for all $\gl\in S_{\gd}\cup U_r(0)$ for some $r>0$. Then 
\begin{enumerate} \renewcommand{\labelenumi}{(\alph{enumi})}
\item The function 
$z\mapsto \zeta_{R_gT_wA}(z):=\Tr(R_gT_wAH^{-z})$ is defined and holomorphic in the half-plane   $\{z\in \C: \ord H \re z > 2n+\ord A\}$.

\item $\zeta_{R_gT_wA}$ has a meromorphic continuation to $\C$ with at most simple poles 
in the points $(2m+\ord A-j)/\ord H$, where $m$ is the complex dimension of the
fixed point set of $g$. 
The function $\Gamma \zeta_{R_gT_wA}$ has the pole structure \eqref{trace_exp}.

\item The coefficients $\tilde c_j, \tilde c_j'$ and $\tilde c_j''$ in \eqref{trace_exp}  are related to the coefficients $c_j, c_j'$ and $c_j''$ in \eqref{resolvent_exp} by universal constants. In particular, we have 
$$\Res\zeta_{R_gT_wA} = c_0' = \tilde c_0'.$$ 
 
\item If the fixed point set of the affine mapping $\mathbb{C}^n\to\mathbb{C}^n, v\mapsto gv+w$ is empty, then $\zeta_{R_gT_wA}$ has no poles.

\item $\zeta_{R_gT_wA}$ has rapid decay along vertical lines $z=c+it$, $t\in \mathbb R:$
\begin{equation}\label{eq-decay3}
 |\zeta_{R_gT_wA}(z)\Gamma(z)|\le C_N (1+|z|)^{-N},\quad \text{for all }N\ge 0, 
 \quad |\im z|\ge 1,
\end{equation}
uniformly for $c$ in compact intervals.
\end{enumerate} 
\end{theorem}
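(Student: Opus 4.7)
The plan is to use Theorem \ref{thm_resolvent} as input and to invoke the abstract conversion machinery of Grubb and Seeley \cite[Propositions 2.9 and 5.1]{GS2} to translate the resolvent expansion into a statement about the zeta function. Parts (a)--(e) will be addressed in turn, with (a) essentially routine, (b)--(d) following from the resolvent-to-zeta transform, and (e) a consequence of uniform estimates combined with Stirling's formula.

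Part (a) is a direct consequence of the mapping properties recalled in Section \ref{sect_algebra}: $AH^{-z}$ lies in the Shubin class of order $\ord A-\ord H\cdot\re z$, which is of trace class as soon as $\ord A-\ord H\cdot\re z<-2n$. Since $R_g$ and $T_w$ are bounded on $L^2(\R^n)$, the product $R_gT_wAH^{-z}$ is trace class on the indicated half-plane, and holomorphy there follows from the standard Cauchy representation of complex powers.

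For parts (b)--(d), the heart of the matter is to relate $\zeta_{R_gT_wA}$ to the resolvent trace via an identity of the form
\begin{equation*}
H^{-z}=\frac{\gG(z+K-1)}{(K-1)!\,\gG(z)}\cdot\frac{i}{2\pi}\int_{\cC}\gl^{K-1-z}(H-\gl)^{-K}\,d\gl,
\end{equation*}
where $\cC$ is a Hankel-type contour going around the spectrum of $H$ inside $\C\setminus S_\gd$; the hypothesis that $H-\gl$ be invertible on $S_\gd\cup U_r(0)$ ensures that such a contour lies entirely in the resolvent set. I would then take the trace with $R_gT_wA$ in front and substitute the resolvent expansion \eqref{resolvent_exp}. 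Each power term $c_j(-\gl)^{s_j-K}$, where $s_j=(2m+\ord A-j)/\ord H$, contributes a simple pole at $z=s_j$ with an explicit Beta-function coefficient, yielding the first sum in \eqref{trace_exp}. The non-logarithmic terms $c_j''(-\gl)^{-j-K}$ give simple poles at $z=-j$, whereas the terms $c_j'\ln(-\gl)(-\gl)^{-j-K}$ give double poles there. After multiplication by $\gG(z)$, whose simple poles at the nonpositive integers merge with the simple poles produced by the $c_j''$-contributions, the combined local expansion takes exactly the form $\tilde c_j'/(z+j)^2+\tilde c_j''/(z+j)$ prescribed by \eqref{trace_exp}. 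The constants relating $\tilde c_j,\tilde c_j',\tilde c_j''$ to $c_j,c_j',c_j''$ come out as explicit Beta-integral values, and in particular $\Res\zeta_{R_gT_wA}=c_0'=\tilde c_0'$, giving (c). Statement (d) is immediate, since Theorem \ref{thm_resolvent}(d) makes the whole resolvent trace $O(\gl^{-\infty})$, so the contour integral is entire in $z$ and $\zeta_{R_gT_wA}$ has no poles.

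For (e), I would combine the uniform decay bound $\Tr(R_gT_wA(H-\gl)^{-K})=O(|\gl|^{-\gve})$ from Theorem \ref{thm_resolvent}(b) with the exponential decay of $\gG$ along vertical lines. Using the Cauchy representation and estimating $|\gl^{K-1-z}|$ on $\cC$, one sees that $\zeta_{R_gT_wA}(z)$ grows at most polynomially in $|\im z|$ on vertical strips, while $|\gG(z)|\lesssim e^{-\pi|\im z|/2}|z|^{\re z-1/2}$ by Stirling, so the product decays faster than any polynomial as $|\im z|\to\infty$. The hard part of the proof is the careful bookkeeping of how the $\ln(-\gl)$ terms in the resolvent expansion combine with the simple poles of $\gG(z)$ at the nonpositive integers to produce the $(z+j)^{-2}$ singularities; once the resolvent expansion of Theorem \ref{thm_resolvent} is established, this step is purely abstract and is exactly the content of \cite[Proposition 2.9]{GS2}.
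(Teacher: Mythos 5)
Your proposal follows essentially the same route as the paper: the paper obtains Theorem \ref{thm_zeta} (and \ref{thm_exp}) directly from the resolvent expansion of Theorem \ref{thm_resolvent} by invoking the abstract transition results, Propositions 2.9 and 5.1 of \cite{GS2}, which is exactly the machinery you use, with (a) and (e) handled by the same standard trace-class and Stirling-type estimates. One minor slip that does not affect the argument (since the bookkeeping is delegated to \cite{GS2}): the prefactor in the contour-integral identity should be $(K-1)!\,\Gamma(z-K+1)/\Gamma(z)$, i.e.\ $(K-1)!/\bigl((z-1)\cdots(z-K+1)\bigr)$, rather than $\Gamma(z+K-1)/\bigl((K-1)!\,\Gamma(z)\bigr)$.
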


\begin{theorem}\label{thm_exp}We use the notation in Theorem {\rm \ref{thm_resolvent}} and assume additionally that $H-\gl$ is invertible for $\gl\in \overline S_\gd$ for some $\gd\in {]\pi/2, \pi]} $. Then:
\begin{enumerate} \renewcommand{\labelenumi}{(\alph{enumi})}
\item The function $t\mapsto \Tr(R_gT_wA\exp(-tH)) $ is defined for all $t>0$. 
\item As $t\to 0^+$ it has the asymptotic expansion \eqref{exp_exp}
with the same coefficients as for the zeta function in Theorem {\rm \ref{thm_zeta}}. 
\end{enumerate}
\end{theorem}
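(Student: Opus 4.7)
The plan is to deduce Theorem \ref{thm_exp} from the resolvent trace expansion \eqref{resolvent_exp} of Theorem \ref{thm_resolvent} by applying the abstract machinery of Proposition 5.1 in \cite{GS2}, whose hypotheses are exactly the conclusions (a)--(c) of that theorem. The mechanism is the Cauchy representation of the heat operator by a contour integral of the resolvent.

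For part (a), since $\gd>\pi/2$, the closed sector $\overline S_\gd$ contains a neighbourhood of the negative real axis wider than a half-plane, and by assumption $H-\gl$ is invertible there. Hence one may choose a Hankel-type contour $\cC\subset\overline S_\gd$ around the spectrum of $H$ along which $e^{-t\gl}$ decays rapidly for $t>0$. Writing $(H-\gl)^{-K}=\tfrac{1}{(K-1)!}\partial_\gl^{K-1}(H-\gl)^{-1}$ and integrating by parts along $\cC$ gives
$$R_gT_w A\, e^{-tH} = \frac{(-1)^K(K-1)!}{2\pi i\, t^{K-1}}\int_\cC e^{-t\gl}\,R_gT_w A(H-\gl)^{-K}\,d\gl.$$
By Theorem \ref{thm_resolvent}(a),(b) the integrand is trace class and $\Tr(R_gT_wA(H-\gl)^{-K})=O(|\gl|^{-\gve})$ along $\cC$, so one may interchange trace and integral, proving (a).

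For part (b), I would substitute the expansion \eqref{resolvent_exp} into the resulting contour integral and evaluate each contribution by the standard identity $\frac{1}{2\pi i}\int_\cC e^{-t\gl}(-\gl)^{-s}d\gl = t^{s-1}/\Gamma(s)$ together with its derivative in $s$; the latter produces the $\ln t$ contributions coming from the logarithmic terms of \eqref{resolvent_exp}. This yields precisely the shape of \eqref{exp_exp} with coefficients expressed as universal Gamma-factor multiples of $c_j, c_j', c_j''$. Agreement with the coefficients $\tilde c_j, \tilde c_j', \tilde c_j''$ of Theorem \ref{thm_zeta} is then immediate from the Mellin identity $\Gamma(z)\zeta_{R_gT_wA}(z)=\int_0^\infty t^{z-1}\Tr(R_gT_wA e^{-tH})\,dt$, which, through Proposition 2.9 of \cite{GS2}, pairs the small-$t$ asymptotics of the heat trace with the pole structure of the zeta function.

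The principal obstacle is to justify the termwise asymptotic integration over the unbounded contour: the remainder after $N$ terms of \eqref{resolvent_exp} must be controlled uniformly in $\gl\in\cC$ in order to extract a genuine remainder of order $o(t^{N/\ord H})$ in the heat trace. This uniformity is precisely what Proposition 5.1 of \cite{GS2} requires and supplies; given the decay and trace-norm estimates already furnished by Theorem \ref{thm_resolvent}, the appeal to that proposition is direct.
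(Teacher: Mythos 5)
Your proposal is correct and follows essentially the same route as the paper: the paper proves Theorem \ref{thm_exp} simply by invoking Proposition 5.1 (and Proposition 2.9 for the coefficient identification) of \cite{GS2} applied to the resolvent expansion of Theorem \ref{thm_resolvent}, under the extra hypothesis $\gd>\pi/2$ ensuring the contour representation of $e^{-tH}$. Your write-up merely spells out the contour-integral mechanism inside those abstract results, so there is no substantive difference.
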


\subsection*{The noncommutative residue} 
Let $G$ be a  subgroup of $\C^n\rtimes \U(n)$. 

\begin{definition}\label{def_ncr}
For $w_0\in \C^n$, $g_0\in \U(n)$ such that $(w_0,g_0)\in G$ we define the noncommutative residue 
$\res_{\skp{(w_0,g_0)}} $
localized at the conjugacy class $\skp{(w_0,g_0)}$ 
in $G$ by 
\begin{eqnarray*}\label{def_res}
\lefteqn{\res_{\skp{(w_0,g_0)}} D = \ord H
\sum_{(w,g)\in \skp{(w_0,g_0)}}\Res \zeta_{A,g,w}(z)}\\
&=& \ord H \sum_{(w,g)\in \skp{(w_0,g_0)}} c_0' ( R_gT_wA)
=\ord H \sum_{(w,g)\in \skp{(w_0,g_0)}}\tilde c_0' ( R_gT_wA)
\end{eqnarray*} 
with the coefficients $c_0'$ and $\tilde c_0'$ introduced in Theorem
\ref{thm_resolvent} and Theorem \ref{thm_zeta}, respectively, 
and the order $\ord H$ of $H$. The expression is independent of $H$ by Theorem \ref{thm_resolvent}(e). 
\end{definition} 
While the definition in terms of the zeta residue or the coefficient in the heat trace expansion is more common, that in terms of the coefficient in the resolvent expansion is slightly more general, since we need less assumptions on the auxiliary operator to define it.

One can compute the residue explicitly. Consider the case of diagonal $g$: 
\begin{equation}
\label{eq-diag2}
g={\rm diag}\Bigl(\underbrace{e^{i\varphi_1},...,e^{i\varphi_{m_1}}}_{m_1},\underbrace{i,...,i}_{m_2},\underbrace{-i,...,-i}_{m_3},\underbrace{-1,...,-1}_{m_4},\underbrace{1,...,1}_{m_5}\Bigr)\in \U(n),
\end{equation}
where $\varphi_j\in {]-\pi,\pi[}\setminus  \pi\mathbb{Z}/2$ and   $m_5=\dim_\C(\mathbb{C}^n)^g$.   
For notational convenience we will also write  $\varphi_j=\pi/2$ for 
$j=m_1+1, \ldots, m_1+m_2$ and $\varphi_j = -\pi/2$ for $j= m_1+m_2+1,\ldots, m_1+m_2+m_3$. 

\begin{theorem}\label{residue}
For  $D=R_gT_wA$ with $g$ as above, 
$w=a-ik\in \C^n$ and  $A\in \Psi^{-2m_5}$ we find the explicit formula: 
\begin{eqnarray}%
\nonumber
\lefteqn{\res_{\skp{(w,g)}} D = 
 (2\pi)^{-n+m_2+m_3} 
\prod_{j=1}^{m_1}  \sqrt{1+i\tg\varphi_j}  
\nonumber}\\
&&\times  
   \prod_{j=1}^{m_1+m_2+m_3}
 e^{\frac i4\ctg(\varphi_j/2)(k_j^2+a_j^2)}
 \int_{\mathbb S^{2m_5-1}}\tr_E a_{-2m_5} (\theta)\,dS \label{eqn_res}	
\end{eqnarray}
where we choose the square root in the right half plane, 
$\mathbb S^{2m_5-1}$ is the unit sphere in the $+1$-eigenspace of the matrix 
$g$, and  $a_{-2m_5}$ is the homogeneous component of degree $-2m_5$ in the asymptotic expansion of the symbol of $A$.

In particular, the residue vanishes for  $A\in \Psi^\fa$ with $\fa<-2m_5$. 
For $\fa>-2m_5$, additional terms containing derivatives of components $a_{\fa-j}$, $\fa-j>-2m_5$ enter. For details see Section \ref{sect_residue}. 
\end{theorem}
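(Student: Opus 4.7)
The plan is to exploit Theorem \ref{thm_resolvent}(e) to choose as auxiliary operator the harmonic oscillator $H = H_0 + c$ with $c > 0$ large enough. This choice has three decisive virtues: $H_0$ commutes with $R_g$ for every $g \in \U(n)$; it factorizes as $H_0 = \sum_j H_{0,j}$ into one-dimensional harmonic oscillators; and its heat kernel is given explicitly by Mehler's formula. Since $g$ is diagonal, we also have $R_g = \bigotimes_j R_{g_j}$ and $T_w = \bigotimes_j T_{w_j}$, so the heat semigroup $e^{-tH_0}$ factorizes as well. By Theorem \ref{thm_exp} and the identity $c_0' = \tilde c_0'$ from Theorem \ref{thm_zeta}(c), the residue $\res_{\skp{(w,g)}} D$ equals $-\ord H = -2$ times the coefficient of $\log t$ at $t^0$ in the small-$t$ expansion of $\Tr(R_g T_w A\, e^{-tH_0})$, so it suffices to compute this expansion.

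Writing $A = \op^w(a)$ with $a$ a classical Shubin symbol of order $-2m_5$, we express the trace as the diagonal integral of the product kernel $R_g T_w A\, e^{-tH_0}$ and split the resulting integral coordinate by coordinate. In each of the directions $j = 1, \ldots, m_1+m_2+m_3$ the combined kernel is Gaussian, and direct evaluation (matching the parameters in the Mehler kernel of $R_{g_j}$ at rotation angle $\varphi_j$ with those of the imaginary-time heat kernel of $H_{0,j}$) produces the factor $\sqrt{1+i\tg\varphi_j}\, e^{\frac{i}{4}\ctg(\varphi_j/2)(k_j^2+a_j^2)}$ for $\varphi_j \notin \pi\Z/2$, with the $\varphi_j = \pm\pi/2$ cases degenerating to ordinary Fourier-type integrals that generate the $(2\pi)^{m_2+m_3}$ factor together with the phases $e^{\pm\frac{i}{4}(k_j^2+a_j^2)}$. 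The $m_4$ directions ($g_j = -1$) contribute elementary reflection-type integrals whose finite $t \to 0$ limits combine with the overall $(2\pi)^{-n+m_2+m_3}$ normalisation. For the formula to be nonvanishing the component of $w$ in the $+1$-eigenspace of $g$ must itself vanish (otherwise Theorem \ref{thm_resolvent}(d) forces the residue to be zero); under this condition $R_{g_j}$ and $T_{w_j}$ reduce to the identity in the $m_5$ directions, and one is left with the ordinary Shubin heat-trace on this subspace, whose $\log t$ coefficient, multiplied by $-\ord H$, yields the standard Boggiatto--Nicola factor $(2\pi)^{-m_5} \int_{\mathbb S^{2m_5-1}} \tr_E a_{-2m_5}(\theta)\,dS$. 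Assembling all contributions gives \eqref{eqn_res}.

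The main obstacle is the precise evaluation of the one-dimensional Gaussian integrals in the $m_1, m_2, m_3$ blocks, where one must handle simultaneously the Mehler kernel of $e^{-tH_{0,j}}$ (at real time $t$), the kernel of $R_{g_j}$ (formally a Mehler kernel at imaginary time $i\varphi_j/2$), and the Heisenberg--Weyl shift $T_{w_j}$. The symmetric combination $k_j^2+a_j^2$ emerges from the Weyl normalisation of $T_{w_j}$ after completing the square, while the branch ``real part positive'' of $\sqrt{1+i\tg\varphi_j}$ is pinned down by continuity in $\varphi_j \in (-\pi,\pi) \setminus \pi\Z$. Once these branches, signs, and phases have been correctly tracked through the one-dimensional calculation, the assembly into the multivariable formula reduces to careful bookkeeping.
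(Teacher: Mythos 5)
Your overall strategy---using Theorem \ref{thm_resolvent}(e) to take $H=H_0+c$, passing to the heat trace via Theorem \ref{thm_zeta}(c) and Theorem \ref{thm_exp}, and exploiting the tensor-product structure of $R_g$, $T_w$ and $e^{-tH_0}$ together with Mehler's formula---is a legitimate alternative in principle, and it genuinely differs from the paper's route: the paper stays with the resolvent trace $\Tr(R_gT_wA(H+\mu^\fm)^{-K})$, reduces it in Proposition \ref{prop_trace} to a single oscillatory integral of the weakly parametric symbol of $A(H+\mu^\fm)^{-K}$ against a quadratic phase, and then isolates the coefficient of $\mu^{-K\fm}\ln|\mu|$ by the radial splitting and stationary-phase analysis of Section \ref{sect_resolvent} together with the homogeneity bookkeeping of Section \ref{sect_residue}.

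The proposal, however, has a genuine gap precisely where the paper invests its main effort. Since $A$ does not factor over the coordinate directions, the trace does not decouple: after writing out the kernels one obtains a single oscillatory integral in which the Weyl symbol $a(y,p)$ couples the transverse and the fixed variables, and the transverse Mehler heat factors are themselves $t$-dependent. Your assertion that the transverse Gaussian integrations have ``finite $t\to0$ limits'' which simply multiply ``the ordinary Shubin heat-trace on this subspace'' is therefore not literally true and is not justified: integrating out the transverse variables produces an asymptotic expansion (evaluation of $a$ on the fixed subspace, plus transverse-derivative corrections, plus corrections from the non-leading behaviour of the transverse heat factors), and one must prove that for $\ord A=-2m_5$ only the underived component $a_{-2m_5}$ feeds the $t^0\log t$ coefficient, all other terms contributing to different powers of $t$. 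This is exactly the content of the paper's constraint $\fa-j-|\ga|-2J=-2m_5$ (see \eqref{homog_cdn} and \eqref{5.0.2}) and of the splitting of the integration domain into $\{|w|\le1\}$, $\{1\le|w|\le|\mu|\}$, $\{|w|\ge|\mu|\}$ in the notation of \eqref{eq:simple}; an analogous splitting in the heat picture, with control of the remainders, is unavoidable, and your sketch neither performs it nor identifies it as the issue. Indeed, you locate the ``main obstacle'' in the one-dimensional Gaussian evaluations and branch choices, but these correspond only to the comparatively routine Step 1 of the paper's argument (the explicit prefactor $C_{\rm res}$ of Proposition \ref{prop_trace}); the analytic core---extraction and identification of the logarithmic coefficient---is missing from the proposal.
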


\begin{remark}
\label{3.7}
For an arbitrary element  $g\in \U(n)$, there exists a unitary $u\in \U(n)$ such that 
$g=ug_0u^{-1}$, where $g_0$ is diagonal and unitary. Then 
\begin{eqnarray*}
\Tr(R_gT_wA H^{-z})
&=&\Tr( R_{g_0}T_{w'} A' (H')^{-z}),
\end{eqnarray*}
where $A'=R_u^{-1} A  R_u$ is a Shubin type operator by Egorov's theorem, $T_{w'}=R_u^{-1}T_wR_u$ with $w'=u^{-1}w$, and  $H'=R_{u^{-1}}HR_{u}$
is an admissible auxiliary operator of the same order, so that  Theorem \ref{residue} applies. 

In case $H=H_0$ is the harmonic oscillator of \eqref{def_ham}, we even have $H'=H_0$, since $H_0$ commutes with $R_u$. 
\end{remark} 

\begin{theorem}\label{3.8} 
For every choice of a conjugacy class $\skp{(w_0,g_0)}$, the  functional $\res_{\skp{(w_0,g_0)}}$ is  a trace.
\end{theorem}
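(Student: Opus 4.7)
The goal is to show that $\res_{\langle (w_0,g_0)\rangle}$ vanishes on commutators, so it suffices to establish $\res_{\langle (w_0,g_0)\rangle}([D,D']) = 0$ for all $D, D' \in \mathscr{A}$. By bilinearity and the linearity of the residue in the canonical decomposition $D = \sum R_g T_w A_{(w,g)}$, the first step is to reduce the claim to simple products $D = U_1 A_1$ and $D' = U_2 A_2$, where $U_i := R_{g_i} T_{w_i}$ is unitary and $A_i \in \Psi$.

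Using the commutation relations \eqref{cr12}, \eqref{eq-comm1} together with Egorov \eqref{Egorov}, both products can be brought into canonical form
\[DD' = U_1U_2\cdot B_{12}, \qquad D'D = U_2U_1\cdot B_{21},\]
with $B_{12} = (U_2^{-1}A_1U_2)A_2$ and $B_{21} = (U_1^{-1}A_2U_1)A_1$ Shubin operators. The products $U_1U_2$ and $U_2U_1$ correspond to $xy$ and $yx$ in $\C^n \rtimes \U(n)$ with $x=(w_1,g_1)$, $y=(w_2,g_2)$, and since $yx = y(xy)y^{-1}$, the two group elements lie in the same conjugacy class; both summands thus contribute to the same localised residue. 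Consequently, the trace property reduces to proving $c_0'(DD') = c_0'(D'D)$ for simple products.

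To establish this identity, I would combine the cyclicity of the standard operator trace with the independence of $c_0' \cdot \ord H$ from the choice of admissible auxiliary operator (Theorem \ref{thm_resolvent}(e)). For $\re z$ sufficiently large, the unitarity of $U_1$ together with cyclicity gives
\[\Tr(DD'H^{-z}) = \Tr(A_1 D' U_1 \tilde H^{-z}), \qquad \tilde H := U_1^{-1}HU_1,\]
while $\Tr(D'DH^{-z}) = \Tr(A_1 H^{-z} D' U_1)$. Since $\tilde H$ is an admissible auxiliary operator of the same order as $H$, Theorem \ref{thm_resolvent}(e) provides $\Res_{z=0} \Tr(A_1D'U_1\tilde H^{-z}) = \Res_{z=0}\Tr(A_1D'U_1H^{-z})$. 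The difference $c_0'(DD') - c_0'(D'D)$ therefore equals $-\Res_{z=0}\Tr(A_1 [H^{-z}, D'U_1])$, and it remains to show this vanishes.

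Within the weakly parametric calculus summarised in the appendix, the commutator $[H^{-z}, D'U_1]$ factors as $-z$ times a weakly parametric operator of strictly lower order: the $\#$-composition $h^{-z}\#\sigma(D'U_1) - \sigma(D'U_1)\#h^{-z}$ picks up a factor of $z$ in every term from the identity $\partial h^{-z} = -z h^{-z-1}\partial h$. The prefactor $z$ cancels the simple pole of the resulting trace at $z=0$, so the residue vanishes. This yields $c_0'(DD') = c_0'(D'D)$ for simple products, and bilinearity extends the trace property to all of $\mathscr{A}$.

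The main obstacle is this last step: making the symbolic commutator estimate rigorous within the weakly parametric framework. The formal mechanism---the factor of $z$ from differentiating $h^{-z}$ that cancels the pole at $z = 0$---is standard, but controlling the full remainder estimates and separating the holomorphic and singular parts of the zeta expansion requires careful bookkeeping, which would be deferred to the full version of the paper announced in the introduction.
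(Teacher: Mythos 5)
Your reduction to simple products, the use of cyclicity of the trace for large $\re z$, and the idea of absorbing the conjugation of the auxiliary operator via the independence statement of Theorem \ref{thm_resolvent}(e) all match the paper's strategy (the paper likewise reduces to $B_1=R_{g_1}T_{w_1}A_1$, $B_2=R_{g_2}T_{w_2}A_2$ and writes the problem as the vanishing of $\Res_{z=0}\Tr(B_1[B_2,H_0^{-z}])$). The genuine gap is in your last step. You treat the commutator $[H^{-z},D'U_1]$ as if $D'U_1$ had a symbol in the (weakly parametric) pseudodifferential calculus and conclude that the $\#$-composition produces an overall factor of $z$ times a family of strictly lower order. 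But $D'U_1=R_{g_2}T_{w_2}A_2R_{g_1}T_{w_1}$ is a Fourier integral operator, not a pseudodifferential one: there is no Shubin symbol $\sigma(D'U_1)$ to compose with $h^{-z}$, so the asserted factorization is not meaningful as stated. Worse, for the metaplectic factor the mechanism fails outright: $[H^{-z},R_g]=R_g\bigl((R_g^{-1}HR_g)^{-z}-H^{-z}\bigr)$, and since $R_g^{-1}HR_g$ has principal symbol $h_{\fm}\circ g\neq h_{\fm}$ in general, this difference is of the \emph{same} order as $H^{-z}$, not lower, and is not ``$z$ times a lower-order weakly parametric operator.'' A similar (milder) problem occurs for the Heisenberg--Weyl factor: $[T_w,H^{-z}]$ cannot be analyzed by the symbolic commutator expansion either, since $T_w$ is not pseudodifferential. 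So the step you defer to ``careful bookkeeping'' is not bookkeeping; for the non-pseudodifferential factors the factor-of-$z$ argument is the wrong tool.

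The paper closes exactly this gap by two devices you would need to add. First, it exploits Theorem \ref{thm_resolvent}(e) to fix the auxiliary operator to be the harmonic oscillator $H_0$, which commutes with every $R_g$, $g\in\U(n)$; the metaplectic contribution to the commutator then vanishes identically, so no estimate on $[H^{-z},R_g]$ is ever needed. Second, it splits the remaining commutator as $R_{g_2}T_{w_2}[A_2,H_0^{-z}]+R_{g_2}[T_{w_2},H_0^{-z}]A_2$: for the genuinely pseudodifferential piece $[A_2,H_0^{-z}]$ your factor-of-$z$ intuition is correct and is carried out via the explicit Seeley expansion of the symbol of $H_0^{-z}$ (each term carries at least one factor $z$, which cancels the at most simple pole at $z=0$); for the Heisenberg--Weyl piece one writes $[T_{w_2},H_0^{-z}]=T_{w_2}\bigl(H_0^{-z}-H_{w_2}^{-z}\bigr)$ with $H_{w_2}=T_{w_2}^{-1}H_0T_{w_2}$, observes that $H_{w_2}$ has the same principal symbol as $H_0$ and is therefore an admissible auxiliary operator, and invokes the independence of the residue from the auxiliary operator once more to see that the two resulting residues cancel. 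Your proof becomes correct if you replace your final symbolic claim by this three-way treatment (or by an equivalent argument handling the $R_g$ and $T_w$ factors through conjugated auxiliary operators and Theorem \ref{thm_resolvent}(e) rather than through symbol calculus).
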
  

\begin{proof}
We use a modification of an argument given by  Dave \cite{D1}.
By linearity it is sufficient to consider two elements $B_1= R_{g_1}T_{w_1}A_1$ and $B_2=R_{g_2}T_{w_2}A_2$. Equations \eqref{eq-comm1} and \eqref{cr12}
imply that 
\begin{eqnarray*}
R_{g_1}T_{w_1}R_{g_2}T_{w_2} =R_{g_1} R_{g_2} T_{g_2^{-1}w_1}T_{w_2}= 
e^{i \im(g_2^{-1}w_1,w_2)} R_{g_1g_2}T_{g_2^{-1} w_1+w_2},
\end{eqnarray*}
so we want to show that 
$$0=\res_{\skp{(g_2^{-1}w_1+w_2,g_1g_2)}}([B_1,B_2])=2\Res([B_1,B_2]H_0^{-z}) .$$
We have chosen here $H_0$ as auxiliary operator; we know from 
Theorem \ref{thm_resolvent}(e) that the result does not depend on the choice. 

For large $\re(z)$, the operators $B_1B_2H_0^{-z}$, $B_2B_1H_0^{-z}$ and 
$B_1H_0^{-z} B_2$ are trace class operators on $L^2(\R^{n})$ and therefore 
$$\Tr ([B_1,B_2]H_0^{-z}) = \Tr(B_1[B_2,H_0^{-z}]).$$
The fact that $H_0$ and hence $H_0^{-z}$ commutes with $R_{g_2}$  implies that  
\begin{eqnarray*}%
[B_2,H_0^{-z}] &=& [R_{g_2} T_{w_2}A_2,H_0^{-z}] = 
R_{g_2} [T_{w_2}A_2,H_0^{-z}]  +[R_{g_2}, H_0^{-z}]T_{w_2}A_2\\
&=&
R_{g_2}T_{w_2} [A_2,H_0^{-z}] + R_{g_2} [T_{w_2},H_0^{-z}]A_2.
\end{eqnarray*}
Let us show that the contribution to the residue from both terms vanishes.

For the first we study the asymptotic expansion of the symbol of the commutator 
$[A_2,H_0^{-z}]$.
The principal symbol of $H_0^{-z}$ is proportional to  $h(x,p)^{-z} = (|x|^2+|p|^2)^{-z}$, while the terms of degree $-2z-j$ in the asymptotic expansion are linear combinations of expressions of the form 
$$(-z)(-z-1)\cdots(-z-k+1) d_{k,j} (x,p) h(x,p)^{-z-k}, \quad 1\le k\le 2j,$$
where $d_{k,j}$ is a polynomial in $x$ and $p$ of degree $2k-j$,
see e.g. \cite[Equation (25)]{Seeley67}.
Hence for arbitrary $N$, the commutator can be written in the form 
$$[A_2, H_0^{-z} ]= \op (q_N(z)+r_N(z)) ,$$
where 
\begin{itemize}
\item $q_N(z)$ is a linear combination of terms of the form 
$$z \cdots(z+k-1) \partial_x^\ga \partial_p^\gb a(x,p) d(x,p) (|x|^2+|p|^2)^{-k-z}$$ 
with suitable derivatives of $a$, polynomials $d$ in $x$ and $p$ and $k\ge 1$. 
\item $z\mapsto r_N(z)$ is a holomorphic family of symbols of order $-N$. 
\end{itemize} 
We can therefore write, for suitable $N'$, 
$$[A_2, H_0^{-z} ]=\sum_{k=1}^{N'} z^k C_k H_0^{-z} + R_N(z).$$ 
with Shubin pseudodifferential operators $C_k$ of order $\le \ord A_2$ and a family 
$z\mapsto R_N(z)$ of trace class operators that is holomorphic near $z=0$. 
Since $\Tr(BH^{-z})$ has at most a simple pole in $z=0$ for any operator $B$ in the 
calculus, we conclude that the contribution from the terms under the summation is zero. 
On the other hand, if  $N$ is  sufficiently large, then $\Tr(BR_N(z))$ will also be holomorphic in $z=0$ and therefore not contribute to the residue. 
This shows that the contribution from the first term is zero.  

For the second, we recall that 
$$
H_0^{-z} = \frac1{2\pi i}\int_{\mathscr C} \gl^{-z} (\lambda -H_0)^{-1} \, d\gl,$$
where $\mathscr C$ is a contour surrounding the spectrum of $H_0$; specifically 
we can choose here $\mathscr C= \{re^{i\vartheta}: \infty>r\ge0\} \cup \{re^{-i\vartheta}:0\le r<\infty\}$ for some angle $\vartheta>0$. 
The operator $T_{w_2}$ is bounded on $L^2(\R^n)$ and
\begin{eqnarray*}%
T_{w_2}^{-1} (\gl-H_0)^{-1} T_{w_2}&=&(T_{w_2}^{-1}  (\gl-H_0)T_{w_2})^{-1}
= (\gl-T_{w_2}^{-1} H_0T_{w_2})^{-1},
\end{eqnarray*}
so that $T_{w_2}^{-1}H_0^{-z}T_{w_2} = H_{w_2}^{-z} $ with 
$H_{w_2} =  T_{w_2}^{-1}H_0T_{w_2}$. 
We conclude that 
$$[T_{w_2},H_0^{-z}] = T_{w_2} (H_0^{-z} -T_{w_2}^{-1} H_0^{-z} T_{w_2}) = T_{w_2} (H_0^{-z} - H_{w_2}^{-z}) . $$
As a consequence, 
\begin{eqnarray*}\lefteqn{
\Tr( B_1 R_{g_2} [T_{w_2},H_0^{-z}]A_2) = \Tr(A_2 B_1 R_{g_2} [T_{w_2},H_0^{-z}]) }\\
&=& \Tr(A_2 B_1 R_{g_2} T_{w_2}H_0^{-z}) - \Tr(A_2 B_1 R_{g_2} T_{w_2}H_{w_2}^{-z}) .
\end{eqnarray*}
Now the principal symbol of $H_{w_2}$ coincides with that of $H_0$; it is therefore also an admissible auxiliary operator. Since we know that the residue is independent of the choice of the auxiliary operator, we see that the difference of the residues must vanish. 
\end{proof}  
 
\section{The Resolvent Expansion}\label{sect_resolvent} 

Recall that, by assumption, $H$ is a classical Shubin type operator of order $\ord H = \fm\in \Z_{>0}$.
Its principal symbol $h_\fm=h_\fm(x, p)$ is scalar and, moreover, strictly positive  and  homogeneous of  degree $\fm$ in $(x,p)$ for $|(x,p)|\ge1$. .
We write $-\lambda = \mu^{\fm}$ for $\mu$ in the sector  
$$S= S_{\theta/\fm}=\{z\in \C\setminus \{0\}: |\arg z|\le (\pi-\theta)/\fm\}$$ 
so that $h_\fm(x,p)+\mu^\fm$ is invertible for all $\mu \in S$. 

Given a Shubin pseudodifferential operator $A$ of order $\fa$, choose $K\in \N$ so large that $\fa-K\fm<-2n$, so that the operators 
$A(H+\mu^\fm)^{-K}$ and $R_gT_wA(H+\mu^\fm)^{-K}$ are of trace class on $L^2(\R^n)$ for every $\mu$ in the sector $S$, $|\mu|$ sufficiently large.

Denote by $q(y,p;\mu)$ the (complete) symbol of $A(H+\mu^\fm)^{-K}$ in $y$-form,  
i.e. 
\begin{eqnarray}
\Big(A(H+\mu^\fm)^{-K} \Big)u(x) = (2\pi)^{-n} \int e^{i\skp{x-y,p}} q(y,p;\mu) u(y) dy dp. 
\end{eqnarray}

For $w=a-ik$ and a diagonal element $g$ as in \eqref{eq-diag2},
  consider the function 
\begin{eqnarray*}%
\mu \mapsto \Tr(R_gT_w A(H+\mu^\fm)^{-K})
\end{eqnarray*}

Our first step and the basis of the subsequent analysis is the proof of the following proposition.

\begin{proposition}\label{prop_trace}
Up to terms that are $O(|\mu|^{-\infty})$ as $\mu\to \infty $ in $S$, \begin{eqnarray*}%
 \Tr(R_gT_w A(H+\mu^\fm)^{-K}) = 
 C_{\rm res} \int e^{i\phi(u,v)}\tr_E q(B(u,v) -b_0, y,\eta;\mu)\, dud\underline{v}dyd\eta.
\end{eqnarray*}
Here $C_{\rm res}$ is the constant in front of the integral in Theorem \ref{residue} above, $\tr_E$ is the trace in the vector bundle $E$,
$$(x_1,p_1, \ldots x_{n-m_5},p_{n-m_5})= B(u,v)-b_0,
\quad  u,v\in\mathbb{R}^{n-m_5}
$$ 
is an affine linear change of coordinates. 

As a mapping $ (u_j,v_j)\mapsto(x_j,p_j) $, $j=1, \ldots, n-m_5$ the matrix $B$ is a diagonal block matrix with entries $\frac1{\sqrt2}\mat{1&1\\-1&1}$ for  $j\not=m_1+1,\ldots,m_1+m_2+m_3$, for the latter it is the identity. 
In particular, $B$ is orthogonal. 
Moreover, 
$$b_0 = (u_{0,1}, v_{0,1}, \ldots, u_{0,n-m_5}, v_{0,n-m_5})$$  
with 
$$(u_{0,j},v_{0,j} ) 
\begin{cases} \text{as in } \eqref{ujvj},\quad  j=1,\ldots, m_1 
\\
 =((a_j-k_j\sin\gvp_j)/2,0), \quad j=m_1+1,\ldots, m_1+m_2+m_3\\
=(a_j/2,k_j/2), \quad j=m_1+m_2+m_3+1,\ldots, n-m_5
\end{cases} 
$$

Finally $y$ and $\eta$  encompass the variables,
$y=(x_{n-m_5+1},\ldots x_n)$, $\eta=(p_{n-m_5+1},\ldots p_n)$.

The notation `$d\underline v$' indicates that the integration is not over $v_{m_1+1}, \ldots v_{m_1+m_2+m_3}$; for these variables, $q$ is evaluated at the  points
$v_j = \sin\gvp_j(u_j+\frac{a_j-k_j\sin\gvp_j}2)-k_j$.

The phase $\phi$ is given by 
\begin{eqnarray}\label{phase_phi}%
\phi(u,v)&=&
-\sum_{j=1}^{m_1} (\lambda^-_ju_j^2 +\lambda^+_jv_j^2)-\sum_{j=m_1+1}^{m_1+m_2}u_j^2    
+\sum_{j=m_1+m_2+1}^{m_1+m_2+m_3}u_j^2  \nonumber \\        
&&-\sum_{j=m_1+m_2+m_3+1}^{n-m_5}(-u_j^2+v_j^2)\label{eq_phase}
\end{eqnarray}
with $\lambda_j^\pm$ defined in \eqref{lambdaj}, below. 
\end{proposition}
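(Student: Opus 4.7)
The plan is to compute the trace directly from the Schwartz kernel formula $\Tr(P)=\int K_P(x,x)\,dx$ applied to $P=R_gT_wA(H+\mu^\fm)^{-K}$, and to reduce the resulting oscillatory integral block by block according to the one-dimensional decomposition of $g$. Writing $B=A(H+\mu^\fm)^{-K}$ and using the $y$-form representation $K_B(z,x)=(2\pi)^{-n}\int e^{i(z-x)\cdot p}\,q(x,p;\mu)\,dp$, one gets
\[
\Tr(R_gT_wB)=(2\pi)^{-n}\int K_{R_gT_w}(x,z)\,e^{i(z-x)\cdot p}\,q(x,p;\mu)\,dx\,dz\,dp.
\]
Since $g$ is diagonal, $K_{R_gT_w}$ factors as a tensor product over the coordinates, with four possible one-dimensional kernels from Section \ref{sect_algebra}: a delta function $\delta(x_j\mp z_j\mp a_j)$ with a modulation for $g_j=\pm 1$; a plane wave $(2\pi)^{-1/2}e^{\mp ix_j(z_j+a_j)}$ with a modulation for $g_j=\pm i$; and a Mehler Gaussian $\sqrt{(1-i\ctg\gvp_j)/(2\pi)}\,\exp(i\Phi_j)$ with quadratic $\Phi_j$ for $g_j=e^{i\gvp_j}$, $\gvp_j\notin\pi\Z/2$.

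Next I would reduce each block in turn. The $m_5$ copies of $g_j=1$ are handled by the delta function which identifies $x_j=z_j+a_j$; after renaming $(z_j,p_j)=(y_j,\eta_j)$ these produce precisely the $(y,\eta)$-integration in the claimed formula with no quadratic phase. For $g_j=-1$ the delta identifies $x_j=-z_j-a_j$, and the residual bilinear phase in $(x_j,p_j)$ becomes the saddle $u_j^2-v_j^2$ after applying the rotation $B=\tfrac{1}{\sqrt 2}\bigl(\begin{smallmatrix}1&1\\-1&1\end{smallmatrix}\bigr)$ and the shift $b_0=(a_j/2,k_j/2)$. For $g_j=\pm i$ the $z_j$-integration pins $p_j$ to a specific linear function of $x_j$, namely the value $v_j=\sin\gvp_j(u_j+(a_j-k_j\sin\gvp_j)/2)-k_j$ at which $q$ is evaluated, while completing the square in the remaining $x_j$-variable produces the shift $u_{0,j}=(a_j-k_j\sin\gvp_j)/2$ and the quadratic phase $\mp u_j^2$. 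For the Mehler blocks I would Gaussian-integrate the variable $z_j$ (whose coefficient $\ctg\gvp_j/2$ is nonzero by assumption) and then analyse the resulting two-dimensional quadratic form in $(x_j,p_j)$: after the same rotation $B$ and an appropriate shift, this form diagonalises with eigenvalues $\lambda_j^\pm$, producing the Mehler contribution in \eqref{eq_phase}.

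Having completed the reduction, the next step is to collect all prefactors. The $(2\pi)^{-n}$ from $K_B$, the $(2\pi)^{-1/2}$ from each Fourier kernel, the $\sqrt{(1-i\ctg\gvp_j)/(2\pi)}$ from each Mehler kernel, the $2\pi$-factors from each $\delta$-reduction, and the phase constants of type $e^{ia_jk_j/2}$ and $e^{\frac{i}{4}\ctg(\gvp_j/2)(k_j^2+a_j^2)}$ arising from the completions of squares must combine into the single prefactor $C_{\mathrm{res}}$ of Theorem \ref{residue}. The remaining conceptual issue is to justify the identity \emph{modulo} $O(|\mu|^{-\infty})$. Since $q(\cdot,\cdot;\mu)$ is only a parameter-dependent Shubin symbol rather than a Schwartz function in phase space, the interchanges of integration, the distributional identity $\int e^{i\alpha z}\,dz=2\pi\delta(\alpha)$, and the Gaussian $z_j$-integration in the Mehler blocks must all be validated inside the weakly parametric Grubb--Seeley calculus recalled in the appendix; residual terms from symbolic cutoffs are $O(|\mu|^{-\infty})$ by integration by parts in the non-stationary directions, using the decay of $q$ as $\mu\to\infty$.

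The main obstacle is therefore two-fold. First, the constant- and phase-bookkeeping in the collection step has to match exactly, with careful attention to the branches of the square roots (`real part positive' in the Mehler formula) and to sign conventions that easily propagate into errors; since $C_{\mathrm{res}}$ ultimately appears in the explicit residue formula \eqref{eqn_res}, any slip there would be visible downstream. Second, once the formula of the proposition is established, Theorem \ref{thm_resolvent}(d) should follow as a corollary: a small linear-algebra calculation using the explicit form of $b_0$ and $B$ should show that the absence of a fixed point of the affine map $v\mapsto gv+w$ forces the combined oscillatory phase in $(u,v,y,\eta)$---namely $\phi$ together with the linear $(y,\eta)$-phase coming from the fixed block---to have no critical point, after which iterated integration by parts in the non-stationary direction yields the claimed $O(|\mu|^{-\infty})$ decay.
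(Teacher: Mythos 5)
Your proposal is correct and takes essentially the same route as the paper's own sketch: the paper's Step 1 is precisely the block-by-block kernel computation you describe (delta, Fourier and Mehler kernels for the $\pm1$, $\pm i$ and generic eigenvalue blocks, carried out with explicit constants as in Equations (76)--(77) of \cite{SaSch3}), yielding the prefactor and the phase $\phi_3$ in \eqref{phase3}, and its Step 2 is exactly your affine reduction by the rotations $\frac1{\sqrt2}\bigl(\begin{smallmatrix}1&1\\-1&1\end{smallmatrix}\bigr)$ and the shifts \eqref{ujvj}, producing $\lambda_j^\pm$ as in \eqref{lambdaj} and the constant $C_{\rm res}$. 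The issues you flag (branch and constant bookkeeping, justification of the oscillatory-integral manipulations modulo $O(|\mu|^{-\infty})$, and the $O(|\mu|^{-\infty})$ decay when $a_j\neq0$ or $k_j\neq0$ on the fixed block, which underlies Theorem \ref{thm_resolvent}(d)) are exactly the points the paper likewise defers to \cite{SaSch3} and to the detailed version, so your plan matches the paper's proof.
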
 

That $b_0$ has no entries for $j=n-m_5+1,\ldots,n$ is due to the fact that the expansion is 
$O(|\mu|^{-\infty})$ whenever $a_j\not=0$ or $k_j\not=0$ for some $j>n-m_5$.

\begin{proof}
The proof is lengthy; here is a sketch: 

{\em Step} 1. A computation very similar to that for Equations (76) and (77) in the proof of 
\cite[Theorem 5]{SaSch3} with explicit constants shows that the trace is $O(\mu^{-\infty}) $
unless $a_j=k_j=0$ for $j>n-m_5$ and then
\begin{eqnarray*}
\lefteqn{
\Tr(R_gT_wA(H+\mu^\fm)^{-K})}\\ 
&=&  (2\pi)^{-n+m_2+m_3} e^{-i\skp{a,k}/2}
\prod_{j=1}^{m_1}
\sqrt{1+i\tg\varphi_j}
\\ 
&&\times \int e^{i\phi_3(x,p')} \tr_Eq(x,p',p'';\mu)_{|p''_j = \pm x_j-k_j}\, dp'dx.
\end{eqnarray*}
Here, the notation $ q(x,p',p'';\mu)_{|p''_j = \pm x_j-k_j}$ indicates that we 
evaluate $p''_j$ at $x_j-k_j$ for $j=m_1+1,\ldots, m_1+m_2$ and at $-x_j-k_j$ 
for $j=m_1+m_2+1,\ldots, m_1+m_2+m_3$, and the phase $\phi_3$ is given by
\begin{eqnarray}\label{phase3}
\lefteqn{\phi_3(x,p')=
\sum_{j=1}^{m_1}
     \Bigl(
        x_j^2\Bigl(\frac{\ctg\varphi_j}2-\frac{1}{ \sin 2\varphi_j }\Bigr)
       +x_jp_j\Bigl(\frac{1}{\cos\varphi_j}-1 \Bigr)}\\
       &&-p_j^2\frac{\tg\varphi_j}2  
       +x_j\frac{k_j}{\cos\varphi_j}
       -p_j\left( a_j+k_j\tg\varphi_j\right)-\frac{ \tg\varphi_j}{2}k_j^2
      \Bigr)\nonumber  \\
&&-\sum_{j=m_1+1}^{m_1+m_2+m_3}(x_j+a_j)\left(\frac{x_j}{\sin\varphi_j}-k_j\right)   \nonumber  \\  
&&-\sum_{j=m_1+m_2+m_3+1}^{n-m_5}( 2x_jp_j +a_j p_j+k_jx_j ).\nonumber
\end{eqnarray}

Noting that 
\begin{eqnarray*}%
\ctg\varphi_j -\frac1{\sin\varphi_j\cos\varphi_j }
&=& \frac{\cos^2\varphi_j -1}{\sin\varphi_j\cos\varphi_j }
=-\tg\varphi_j\text{ and}\\
\frac1{\cos\varphi_j}-1&=& \tg\varphi_j \ \frac{1-\cos\varphi_j}{\sin\gvp_j} 
\end{eqnarray*}
we see that 
\begin{eqnarray*}
\phi_3(x,p) &=& 
-\sum_{j=1}^{m_1} \frac{\tg\gvp_j}2
\Bigg(x_j^2\ 
-2x_jp_j\Big(\frac{1-\cos\varphi_j}{\sin\gvp_j}\Big)
+p_j^2 
+2x_j\frac{k_j}{\sin \varphi_j}\\
&&+2p_j\Big(
\frac{a_j}{\tg \gvp_j} 
+k_j\Big)+k_j^2\Bigg)
-\sum_{j=m_1+1}^{m_1+m_2+m_3}(x_j+a_j)\left(\frac{x_j}{\sin\varphi_j}-k_j\right)   \\  
&&-\sum_{j=m_1+m_2+m_3+1}^{n-m_5}( 2x_jp_j +a_j p_j+k_jx_j ).
\end{eqnarray*}

{\em Step} 2. We next make several changes of coordinates of the form 
$$\binom{x_j}{p_j} = B_j\binom{x_j}{p_j}- \binom{u_{0,j}}{v_{0,j}}, \quad j=1,\ldots, n-m_5.$$ 

For  $j=1,\ldots,  m_1$ we let $B_j= \frac1{\sqrt2}\mat{1&1\\-1&1}$,  
\begin{align}
u_{0,j} &= \frac{2\gb_j-\gg_j\ga}{4-\ga_j^2}, \quad\text{and }\quad   v_{0,j} =  \frac{2\gg_j-\gb_j\ga_j}{4-\ga_j^2}\quad\text{with} \label{ujvj}
\\
\ga_j &= -2\frac{1-\cos\gvp_j}{\sin\gvp_j}, \quad  
\gb_j =\frac{2k_j}{\sin\gvp_j},\quad 
\gg_j = \frac{2a_j}{\tg \gvp_j}+2k_j.\label{gammaj}
\end{align}

For  $j=m_1+1,\ldots ,m_1+m_2+m_3$, where $\gvp_j=\pm \frac\pi2$, we let
$u_j =x_j+\frac{a_j-k_j\sin\gvp_j}{2}= x_j+\frac{a_j\mp k_j}{2}$, with $+$ for $j=m_1+1,\ldots ,m_1+m_2$, and  $-$ else. Recall that here, $p_j = \pm x_j-k_j$. 

Finally for $j=m_1+m_2-m_3+1,\ldots, n-m_5$, we let $B_j$ be as above, $u_{0,j}= a_j/2$ and 
$v_j=k_j/2$. 

This leads to the expression for $\Tr(R_gT_w A(H+\mu^\fm)^{-K})$ in the statement for the phase \eqref{phase_phi} with  
\begin{eqnarray}\label{lambdaj}%
\gl_j^\pm =\frac{\tg\varphi_j}4 (2\pm \ga_j )= \frac{1}2  \frac{\sin \gvp_j \mp(1-\cos\gvp_j)}{\cos\gvp_j},\ j=1,\ldots, m_1
\end{eqnarray}
(note that $\gl_j^\pm\not=0$ for  $j=1,\ldots, m_1$) 
and the constant $C_{\rm res}$.

This is what we had to show. 
\end{proof}

\subsection*{A Simplification: Omitting the Heisenberg-Weyl Operators}

If  $a_j=k_j=0$ for all $j$, the above formula becomes 
\begin{eqnarray*}%
&&\Tr(R_gA(H+\mu^{\fm})^{-K})
=  (2\pi)^{-n+m_2+m_3} 
\prod_{j=1}^{m_1}  
\sqrt{1+i\tg\varphi_j}\\
&&\quad\times\int e^{i\phi_6(u,v)} \tr_E q(B\mat{u\\v}-b_0,x'''',p'''';\mu)\, dudvdx''''dp''''
\end{eqnarray*}
with 
$u,v$ subsuming the 
above coordinates $u',u'',u''',v',v'',v'''$, where $v''$ is evaluated at 
$v''_j= \sin\gvp_j (u''_j-\frac{a_j-k_j\sin\gvp_j}{2})-k_j$ and   
\begin{eqnarray*}%
\phi_6 &=&
-\sum_{j=1}^{m_1} (\lambda^-_ju_j^2 +\lambda^+_jv_j^2)-\sum_{j=m_1+1}^{m_1+m_2}u_j^2    
+\sum_{j=m_1+m_2+1}^{m_1+m_2+m_3}u_j^2   \\        
&&-\sum_{j=m_1+m_2+m_3+1}^{n-m_5}(-u_j^2+v_j^2).
\end{eqnarray*}

\subsection*{Sketch of the Proof of Theorem \ref{thm_resolvent}} We proceed in several steps. 

\subsubsection*{Step 1. Notation}
In order to keep the notation simple, we shall rewrite the expression found
in Proposition \ref{prop_trace}, namely 
\begin{eqnarray*}%
 \Tr(R_gT_w A(H+\mu^\fm)^{-K}) = 
 C_{\rm res} \int e^{i\phi(u,v)}\tr_Eq(B\mat{u\\v} -b_0, y,\eta;\mu)\, dud\underline{v}dyd\eta.
\end{eqnarray*}
in the form 
\begin{eqnarray}\label{eq:simple} 
\Tr(R_gT_w A(H+\mu^\fm)^{-K}) = 
 C_{\rm res} \int_{\R^{n'}} e^{i\phi(w)}\tilde q(\tilde Bw-\tilde b_0;\mu)\, dw.
\end{eqnarray}
Here, 
\begin{itemize} 
\item $w=(w_1,\ldots,w_{n'})$ where $n'=2n-m_2-m_3$ denotes the number of variables used in the integration, 
\item 
$\tilde q$ is $\tr_Eq$ with the variables  $v_j$, $j=m_1+1,\ldots,m_2+m_3 $, evaluated at 
$v_j =  \sin\gvp_j (u_j-\frac{a_j-k_j\sin\gvp_j}{2})-k_j$ (recall that for these $j$, 
we have $\gvp_j=\pm \pi/2$)
\item $\tilde B\in \mathrm O({n'})$ and $\tilde  b_0\in \R^{n'}$ are made up from the corresponding components of $B$ and $b_0$. 
\item 
$\phi(w) = \sum_{j=1}^{n'}\tilde \lambda_j w_j^2 $, where the $\gl_j$ are determined by the coefficients in the representation of $\phi_6$, i.e. the  
$\gl_j^\pm$, $j=1,\ldots, m_1$, the coefficients $-1$ and $+1$ of $u_j^2$, $j=m_1+1,\ldots, m_1+m_2+m_3$, the coefficients $+1$ of $u_j^2$  and $-1$ of $v_j^2$ for $j=m_1+m_2+m_3+1,\ldots , 2n-m_5$,   and where we complement the summation by letting  
$\tilde \lambda_j=0$ for the variables corresponding to $(u_j,v_j)$, $j>2n-2m_5$.
\end{itemize} 

\subsubsection*{Step 2. Reduction of the integrand}
Instead of considering the full symbol $\tilde q(\tilde Bw-\tilde b_0;\mu)$ in \eqref{eq:simple}, 
it will suffice to study the homogeneous components in the expansion of the symbol of $\tilde q$. 

This is a consequence of the following  results that can be shown to hold more generally in the Grubb-Seeley calculus (we here use the notation from the appendix). 
We first consider the effect of the affine linear transformation $w\mapsto \tilde Bw-\tilde b_0$. 

\begin{lemma} \label{4.5}
Let $q(w; \mu)\in S^{m,d}(\R^{n'};S)$, $B\in \cL(\R^{n'})^{-1} $ and $b\in \R^{n'}$.  
Define $ q_1(w;\mu) = q(Bw+b;\mu)$ and $q_2(w;\mu)  = p(w+b;\mu)$.  
Then $q_1$ and $q_2$ are elements of $S^{m.d}(\R^{n'};S)$. After a finite Taylor expansion 
\begin{eqnarray*}%
p(w+b;\mu) &=& \sum_{|\ga|< N} \frac{b^\ga}{\ga!} \partial^\ga_w p(w;\mu) +r_N(w;\mu),
\end{eqnarray*}
the remainder  $r_N$ is in $S^{m-N,d}(\R^{n'}; S)$.
\end{lemma}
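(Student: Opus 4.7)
The strategy is to verify the seminorm estimates defining $S^{m,d}(\R^{n'};S)$ directly for $q_1$ and $q_2$, and then to analyze the Taylor remainder by writing it in integral form. Recall from the appendix that the symbol estimates involve both a $w$-weight of order $m$ and a regularity/weight index $d$ tied to the parameter $\mu \in S$.

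For $q_1$, I apply the chain rule: for any multi-index $\alpha$ and any $j \in \N$,
$$\partial_w^\alpha \partial_\mu^j q_1(w;\mu) \;=\; \sum_{|\gamma| = |\alpha|} c_{\alpha,\gamma}(B)\,(\partial_w^\gamma \partial_\mu^j q)(Bw+b;\mu),$$
where the $c_{\alpha,\gamma}(B)$ are polynomials in the entries of $B$. Since $B$ is invertible and $b$ is fixed, $\skp{Bw+b}$ is comparable to $\skp{w}$ uniformly in $w \in \R^{n'}$, so the $S^{m,d}$-estimates for $q$ evaluated at $Bw+b$ translate directly into estimates of the same $(m,d)$-type for $\partial_w^\alpha \partial_\mu^j q_1$. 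The claim for $q_2$ is the special case $B = \Id$.

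For the Taylor remainder, I use the integral form
$$r_N(w;\mu) \;=\; N \sum_{|\alpha| = N} \frac{b^\alpha}{\alpha!} \int_0^1 (1-t)^{N-1}(\partial_w^\alpha q)(w+tb;\mu)\,dt.$$
For each $|\alpha| = N$, $\partial_w^\alpha q$ lies in $S^{m-N,d}(\R^{n'};S)$ by definition of the calculus, and by the first step (applied with $B = \Id$) the translate $(\partial_w^\alpha q)(\,\cdot\, + tb;\mu)$ lies there as well, with seminorms bounded uniformly in $t \in [0,1]$. Integration over $[0,1]$ then preserves the symbol class, giving $r_N \in S^{m-N,d}(\R^{n'};S)$.

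The main obstacle, such as it is, amounts to careful bookkeeping: confirming that each of the seminorms defining $S^{m,d}$, involving both the $w$-weight and the behaviour in $\mu$, is preserved by each operation. What keeps this routine is that neither the affine change of variables nor the Taylor expansion touches $\mu$: the change of variables involves only $B$ and $b$, and the Taylor remainder involves only $w$-differentiation. Consequently the $d$-index is unaffected throughout, and no interaction between the two orders needs to be tracked.
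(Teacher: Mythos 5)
Your argument is correct, and since the paper states Lemma \ref{4.5} without proof (full details are deferred elsewhere), there is nothing to diverge from: chain rule plus the uniform comparability $\skp{Bw+b}\sim\skp{w}$ for invertible $B$, the integral form of the Taylor remainder, and uniformity of seminorms in $t\in[0,1]$ is exactly the intended routine verification, and the fact that $\partial_w^\alpha q\in S^{m-|\alpha|,d}$ is indeed available here because the underlying classes are Shubin-type, so every $w$-derivative lowers the order. The only refinement worth making is notational: the defining estimates of $S^{m,d}(\R^{n'};S)$ are phrased via $\partial_t^j\bigl(t^d q(\cdot,1/t)\bigr)\in S^{m+j}$ uniformly, not via $\partial_\mu^j$; since the affine substitution and the $w$-differentiations commute with these $t$-operations, your bookkeeping goes through verbatim once stated in that form.
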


Moreover, the residual terms have the right behavior:   

\begin{lemma} \label{5.5}
Let $r\in S^{-N,-K\fm}$ and $n_0\in \N$ such that $N>2n+n_0$. Then 
\begin{eqnarray}\label{4.1}%
\int e^{i\phi(w)} r(w;\mu)\, dw =\sum_{k=0}^{n_0-1} c_k \mu^{-K\fm-k} + 
O(\mu^{-K\fm-n_0})  
\end{eqnarray}
with explicitly given coefficients $c_k$.  
\end{lemma}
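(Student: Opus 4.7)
The plan is to use the built-in asymptotic structure of the weakly parametric calculus to decouple the $\mu$-dependence from the oscillatory integration, so that the integral reduces to a finite sum of explicit $\mu$-independent integrals plus a controlled remainder.

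By the definition of the symbol class $S^{-N,-K\fm}$ as recalled in the appendix, $r(w;\mu)$ admits an asymptotic expansion of the form
\[
r(w;\mu) = \sum_{k=0}^{n_0-1} r_k(w)\,\mu^{-K\fm-k} + R_{n_0}(w;\mu),
\]
where each $r_k$ satisfies $|r_k(w)|\le C_k(1+|w|)^{-N-k}$ and the remainder belongs to $S^{-N-n_0,-K\fm-n_0}$, so that
\[
|R_{n_0}(w;\mu)|\le C(1+|w|)^{-N-n_0}|\mu|^{-K\fm-n_0}
\]
uniformly for $\mu\in S$ with $|\mu|$ large. Substituting gives
\[
\int e^{i\phi(w)}r(w;\mu)\,dw = \sum_{k=0}^{n_0-1} c_k\,\mu^{-K\fm-k} + \int e^{i\phi(w)}R_{n_0}(w;\mu)\,dw,
\]
with $c_k := \int e^{i\phi(w)}\,r_k(w)\,dw$.

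The integrals defining $c_k$ converge absolutely because $|e^{i\phi(w)}|=1$ and $(1+|w|)^{-N-k}$ is integrable over $\R^{n'}$, since $N>2n+n_0\ge n'$. To render the $c_k$ explicit (as claimed in the statement), one splits $w=(w',w'')$ with $w'$ varying in the $n'-2m_5$ non-degenerate directions of $\phi$ and $w''$ in the $2m_5$ degenerate directions where $\tilde\lambda_j=0$; a Fresnel integration in $w'$, applying $\int_{\R}e^{i\tilde\lambda s^2}f(s)\,ds$ componentwise for each non-degenerate eigenvalue, followed by absolute integration in $w''$, produces a closed-form expression in terms of the $\tilde\lambda_j$ and moments of $r_k$.

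The remainder is controlled by direct integration of the pointwise bound: since $N>2n+n_0\ge n'+n_0$, the spatial integral $\int_{\R^{n'}}(1+|w|)^{-N-n_0}\,dw$ is finite, so
\[
\Bigl|\int e^{i\phi(w)}R_{n_0}(w;\mu)\,dw\Bigr|\le C'|\mu|^{-K\fm-n_0},
\]
which is exactly the remainder term in \eqref{4.1}. The chief obstacle is not the integration itself but invoking the correct asymptotic expansion of $r$ in the weakly parametric sense — with remainder genuinely in $S^{-N-n_0,-K\fm-n_0}$ and estimates uniform across the sector $S$ — because in that calculus $|w|$ and $|\mu|$ may be of comparable size, so the interplay of the two orders $-N$ and $-K\fm$ is delicate. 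Once the appendix supplies this expansion, the explicit computation of the $c_k$ via Fresnel integration and the bound on the remainder are routine.
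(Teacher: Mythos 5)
Your overall strategy --- expand $r$ in powers of $\mu$ at infinity using the weakly parametric calculus, integrate each coefficient against $e^{i\phi}$, and estimate the remainder directly --- is the intended one: the paper gives no details for this lemma and simply relies on the expansion of Theorem \ref{GrubbS2}(b). However, the estimates you attribute to that expansion are misstated in a way that touches the heart of the matter. By Theorem \ref{GrubbS2}(b) applied to $r\in S^{-N,-K\fm}$, the $k$-th coefficient $r_k=r_{(-K\fm,k)}$ lies in $S^{-N+k}$, i.e.\ $|r_k(w)|\lesssim (1+|w|)^{-N+k}$, and the remainder after $n_0$ terms lies in $S^{-N+n_0,\,-K\fm-n_0}$, i.e.\ $|R_{n_0}(w;\mu)|\lesssim (1+|w|)^{-N+n_0}|\mu|^{-K\fm-n_0}$: the spatial decay \emph{deteriorates} with each term of the expansion, it does not improve as in your claimed bounds $(1+|w|)^{-N-k}$ and $(1+|w|)^{-N-n_0}$. (Test this on the model symbol $(h_\fm(w)+\mu^\fm)^{-K}$: writing $t=1/\mu$, one has $t^{K\fm}p(w,1/t)=(1+t^\fm h_\fm(w))^{-K}$, whose $k=j\fm$ Taylor coefficient is a multiple of $h_\fm(w)^j$, growing like $|w|^{k}$.) This is precisely why the hypothesis $N>2n+n_0$ --- and not merely $N>2n$ --- is imposed: it gives $N-k>n'$ for all $k\le n_0-1$ and $N-n_0>2n\ge n'$, so that each $c_k=\int e^{i\phi(w)}r_k(w)\,dw$ converges absolutely and the remainder integral is $O(|\mu|^{-K\fm-n_0})$, uniformly for $\mu$ in closed subsectors of $S$ where the symbol estimates are uniform. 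With the exponents corrected, the rest of your argument goes through unchanged; the Fresnel evaluation in the nondegenerate directions of $\phi$ is an optional refinement, since the $c_k$ are already ``explicitly given'' in terms of the expansion coefficients of $r$.
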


These terms furnish the contributions to the coefficients $c_j''$. 
In fact, as pointed out by Grubb and  Seeley \cite[p.501]{GS1}, the $c_k$ are zero unless $k$ is a multiple of $\fm$, 
so that only integer powers of $-\gl =\mu^\fm$ show up. This follows from the parametrix construction, see Theorem \ref{GrubbS2}, below. 

\subsubsection*{Step 3. Focusing on the homogeneous terms.}
It is actually sufficient to study expressions of the form 
\begin{eqnarray}\label{simplified_integrand}%
d(w) (h_\fm(w) + \mu^\fm)^{-K-\ell},
\end{eqnarray}
where $\ell\in \N_0$ and $d$ is a Shubin symbol, which is homogeneous of  some
degree $\fd= \fa+ \ell \fm-j$, $j\in \N_0$,  for $|w|\ge1$. 
This is a corollary of the following proposition: 
\begin{proposition}\label{4.2}
{\rm (a)} For each $N\in \N$ the components of the complete right symbol $q= q(y,p;\mu)$ 
of  $A(H+\mu^\fm)^{-K}$ can be written as a finite sum of terms of the form 
\begin{eqnarray}\label{4.2.1}%
d(y,p)(h_\fm(y,p) +\mu^\fm)^{-K-\ell} ,
\end{eqnarray}
where $d$ is homogenous in $(y,p)$ of degree $\fd=\fa+\ell\fm-j$ for $j=0,\ldots, N-1$, 
plus a remainder in $S^{-N,-K\fm}\cap S^{-N-K\fm,0}$. 

{\rm (b)} 
Each $d$ is a polynomial in derivatives of the homogeneous components in the symbol expansions of $a$ and $h$. If $\ell=0$ in {\rm (a)}, then $d$ can only be one of the homogeneous components of $a$. 

{\rm (c)} The statement corresponding to {\rm (a)} is also true for the symbol $\tilde q$. Here,
$d$ is additionally scalar, since we take the fiber trace in $E$. 

{\rm (d)} 
In the expansion of $\tilde q(Bw-\tilde b_0;\mu)$ into homogeneous components, the only possible coefficient functions of $(h_\fm(\tilde Bw)+\mu^\fm)^{-K}$ are of the form 
$$\frac{(-\tilde b_0)^\ga}{\ga!}\partial_w^\ga \tr_E a_{\fa-j}(\tilde B w)$$
for suitable $j\in \N_0$ and a multi-index $\ga$.
\end{proposition}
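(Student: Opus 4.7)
The plan is to build the expansion of $q$ in two stages: first construct a parametrix for $(H+\mu^\fm)^{-K}$ in the parameter-dependent Shubin calculus, then compose with $A$ by the standard symbol-composition formula. For part (d), substitute $\tilde B w-\tilde b_0$ and Taylor-expand via Lemma \ref{4.5}.

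For (a) and (b), I would build a right-symbol parametrix for $(H+\mu^\fm)^{-K}$ iteratively. Starting from the principal symbol $(h_\fm+\mu^\fm)^{-K}$, each iterative step produces a finite sum of terms of the form $e(y,p)(h_\fm(y,p)+\mu^\fm)^{-K-\ell}$ with $e$ a polynomial in derivatives of the homogeneous components $h_{\fm-i}$ of $h$; the exponent $\ell$ is controlled by the number of derivatives of the resolvent factor accumulated so far. Composing on the left with $A$ by the Shubin composition formula, each Leibniz derivative falls either on $a$ (producing derivatives of the $a_{\fa-i}$) or on a factor $e(h_\fm+\mu^\fm)^{-K-\ell}$ (each differentiation of the resolvent factor raising the exponent by one, i.e.\ increasing $\ell$). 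Collecting terms of common total Shubin degree $\fa-j-K\fm$ yields the form claimed in (a), with coefficients $d$ polynomial in derivatives of $a$ and $h$, giving (b). In particular $\ell=0$ can be attained only by pairing an $a_{\fa-j}$ with the \emph{principal} resolvent factor $(h_\fm+\mu^\fm)^{-K}$ without any derivative hitting it; a subprincipal parametrix term or a Leibniz derivative on the resolvent strictly increases $\ell$. Truncating at order $j=N-1$ and invoking the standard parameter-dependent estimates of the weakly parametric calculus (Appendix) yields a remainder lying in $S^{-N,-K\fm}\cap S^{-N-K\fm,0}$.

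Part (c) is then immediate: $\tilde q$ is obtained from $q$ by taking $\tr_E$ (which preserves the symbol class and makes the coefficients scalar) and by substituting the specified affine functions $v_j = \sin\gvp_j\bigl(u_j - (a_j - k_j\sin\gvp_j)/2\bigr) - k_j$ for $j=m_1+1,\ldots,m_1+m_2+m_3$; neither operation destroys the product structure $d(h_\fm+\mu^\fm)^{-K-\ell}$. For (d), apply Lemma \ref{4.5} to the affine change $w\mapsto \tilde Bw-\tilde b_0$: the finite Taylor expansion around $\tilde Bw$ produces multipliers $(-\tilde b_0)^\ga/\ga!\,\partial_w^\ga$ on each coefficient function, while the orthogonal action of $\tilde B$ keeps the symbol class and the homogeneities intact. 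By (c) combined with (b), the $\ell=0$ components have coefficient exactly $\tr_E a_{\fa-j}(\tilde Bw)$, so the Taylor expansion yields precisely the stated form $(-\tilde b_0)^\ga/\ga!\,\partial_w^\ga \tr_E a_{\fa-j}(\tilde Bw)$.

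The main obstacle is the simultaneous tracking of orders in $|(y,p)|$ and in $\mu^{-1}$: to guarantee a remainder lying in $S^{-N,-K\fm}\cap S^{-N-K\fm,0}$ one must advance the parametrix iteration and the composition expansion in parallel, verifying at each step that the parameter-dependent estimates of the Grubb-Seeley weakly parametric calculus apply uniformly in $\mu\in S$. This bookkeeping is standard but somewhat intricate, and is where the technical content of the proof is concentrated.
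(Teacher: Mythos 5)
Your proposal is correct and follows essentially the same route the paper relies on: the parametrix construction in the weakly parametric calculus (Theorem \ref{parametrix}), composition with $A$ with Leibniz bookkeeping of the exponent $\ell$, and the finite Taylor expansion of Lemma \ref{4.5} for part (d). The only points worth stating explicitly are that collapsing products of resolvent factors into a single power $(h_\fm+\mu^\fm)^{-K-\ell}$ uses the scalarity of the principal symbol $h_\fm$, and that in (d) any Taylor derivative falling on the resolvent factor raises $\ell$ and hence leaves the coefficient of $(h_\fm(\tilde Bw)+\mu^\fm)^{-K}$ unchanged --- both of which your $\ell$-bookkeeping already covers implicitly.
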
 

So we are led to  considering  integrals  of the form
\begin{eqnarray}\label{eq:int}%
\int_{\R^{n'}} e^{i\phi(w)}  d(w) (h_\fm(w)+\mu^\fm)^{-K-\ell} \, dw
\end{eqnarray}
with $\ell\in \N_0$ and  $d$ homogeneous of 
degree $\fd$ for $|w|\ge 1$.

Similarly as Grubb and Seeley in their analysis of weakly parametric symbols 
\cite[Theorem 2.1]{GS1}, we split the integral over $\R^{n'}$ into the integrals 
over $\{|w|\le 1\}$, $\{1 \le|w|\le |\mu|\}$, and $\{|w|\ge |\mu|\}$. 
It is not difficult  to see that  the integral over $|w|\le 1$ 
produces  powers $\mu^{-(K+\ell+j)\fm}$, $j\in \N_0$, using a binomial series expansion of the integrand. These terms also contribute to the terms  $c_j''(-\gl)^{-K-j}$ in \eqref{resolvent_exp}, even though they are 'local'.

For the other two integrals we introduce polar coordinates and use the fact that  
$\phi$ is 2-homogeneous and  
$d$ is homogeneous of degree $\fd$.

\subsubsection*{Step 4. The integral  over $|w|\ge |\mu|$} 
Letting $w/|\mu|=v=r\gt$ we obtain
\begin{eqnarray}
\lefteqn{
\int_{|w|\ge |\mu|} e^{i\phi(w)} d(w) (h_\fm (w)+\mu^\fm)^{-K-\ell} \, dw}\nonumber\\
&=&|\mu|^{\fd-\fm(K+\ell)+n'} \int_{|v|\ge 1} e^{i\phi(v)|\mu|^2} d(v)  
(h_\fm(v) + (\mu/|\mu|)^\fm)^{-K-\ell} \, dv\nonumber\\
&=&|\mu|^{\fd-\fm(K+\ell)+n'} \int_1^\infty I(\hbar, r, \go)_{|\hbar=(|\mu|^2r^2)^{-1}}
r^{\fd+n'-1}\, dr
\label{4.7.3}
\end{eqnarray}
with 
\begin{eqnarray*}%
I(\hbar, r,\go) = \int_{\mathbb S^{n'-1}} e^{i\phi(\gt)/\hbar}d(\gt) 
(h_\fm(r\gt)+\go^\fm)^{-K-\ell}  \, dS(\gt).\label{4.7.4}
\end{eqnarray*}

The stationary phase approximation, see e.g. \cite[Theorem 3.16]{Z}, yields an expansion 
for this integral:  
Since $\phi(\theta)=\sum_{j=1}^{n'}   \tilde \lambda_j\theta^2_j$, the stationary 
points on the sphere are the unit length eigenvectors of the matrix 
$\text{\rm diag}(\lambda_1, \ldots, \lambda_{n'})$. We denote by $\{\mu_l\}$ the set of {\em different} $\lambda_j$ and by $\gk_l$ the multiplicity of $\mu_l$. 
To fix the notation let us assume that 
$\mu_1=0$ so that, by \eqref{eq-diag2},  $\varkappa_1=2\dim _\C \mathbb{C}^n_g$ is the real dimension of the fixed point set of $g\in \U(n)$.
The set of stationary points therefore
is the disjoint union of the spheres $\mathbb S^{\gk_l-1}$ in the eigenspaces for $\mu_l$. 
Hence we  find: 

For $N\in \N$, 
\begin{eqnarray}\lefteqn{\nonumber
I(\hbar, r,\go)}\\ 
&\mbox{\;\;\;\;}=& \sum_{J=0}^{N-1}
\hbar^{J+\frac{n'-\gk_l}2} 
\sum_le^{\frac i\hbar \mu_l}
\int_{\mathbb S^{\gk_l-1}}A_{2J}(\theta', \partial_{\theta'})\left( d(\theta) 
(h_\fm(r\gt)+\go^\fm)^{-K-\ell}\right)\, dS
\label{4.7.10}\\
&&+ O\Big(\sum_l\hbar^{N+\frac{n'-\gk_l}2} \sum_{|\alpha|\le 2N+n'-\gk_l+1} \sup |\partial^\alpha_{\theta'}\big(d(\theta)(h_\fm(r\gt)+\go^\fm)^{-K-\ell}\big)|\Big)\label{7.2} \nonumber
\end{eqnarray}
with differential operators $A_{2J}$ of order $2J$ in the variables $\theta'$ transversal to the fixed point set. 

We insert this expression for $I(\hbar, r,\go)$ with $\hbar = r^{-2}|\mu|^{-2}$ into the integral over $r$.
The contribution of the remainder term turns out to be  a low power of $|\mu|$ as desired. 
The terms in the expansion, however, give us expressions of the form 
\begin{eqnarray}
\label{expansion-terms}
e^{i\mu_l|\mu|^2} |\mu|^{-(K+\ell)\fm -j} g_j(\mu/|\mu|)
\end{eqnarray}
for $j\in \N_0$ and smooth functions $g_j$.
This is not quite what we would like to obtain:   
There should be no exponential terms with $\mu_l\not=0$ and, moreover, we would like to have expressions in $\mu$ (and its powers), not in $|\mu|$ and $\mu/|\mu|$.  We shall address this problem in Step 6.

\subsubsection*{Step 5. The integral over $1\le |w|\le |\mu|$} 
This is the part with the most delicate analysis. 
We first expand $(h_\fm+\mu^\fm)^{-K-\ell}$ 
into a finite number of powers of $h_\fm\mu^{-\fm}$ and a remainder term. 
Proceeding similarly as in Step 4, we have to treat terms of the form 
\begin{eqnarray}
\mu^{-(K+\ell)\fm- k\fm} 
\int_{1}^{|\mu|} \int_{\mathbb S^{n'-1}}e^{ir^2\phi(\gt)}d(\gt) h_\fm(\gt)^k \, dS
\, r^{\fd+\fm k+n'-1}\, dr
\label{4.12.4}
\end{eqnarray}
and the corresponding integrals over the remainder terms. 
An application of the stationary phase approximation with $\hbar = r^{-2}$ 
for the integral over the sphere  results in terms 
\begin{eqnarray}%
\mu^{-(K+\ell)\fm- k\fm}  c_{l,J}
\sum_l \sum_{J=0}^{N-1} 
\int_{1}^{|\mu|}e^{ir^2\mu_l}
\, r^{\fd+\fm k-2J+\gk_l-1}\, dr
\label{4.14.1}
\end{eqnarray}
with 
\begin{eqnarray}
\label{4.14.1a}
c_{l,J} = \int_{\mathbb S^{\gk_l-1}}A_{2J,l}(\theta',\partial_{\gt'})(d(\gt)h_\fm(\gt)^k)\, dS
\end{eqnarray}
plus a remainder term that is easy to handle.

For $\mu_l\not=0$ integration by parts in \eqref{4.14.1}  yields, as in Step 4, 
 terms of the form \eqref{expansion-terms} above,  but
here -- and only here -- we also encounter logarithmic terms, namely   
for the eigenvalue $\mu_1=0$, when 
\begin{eqnarray}
\label{homog_cdn}
\fd+\fm k -2J+\gk_1=0. 
\end{eqnarray}
Note from \eqref{4.14.1} that the $\mu$-powers associated with log-terms are always integer multiples of $\fm$ as stated in \eqref{resolvent_exp}.  

Of particular interest is  the coefficient of $\mu^{-K\fm}\ln|\mu|$, since this yields the noncommutative residue. 
It is immediate from \eqref{4.14.1} that it can only arise for $\ell=k=0$. 
Together with Proposition \ref{4.2}, this gives us a wealth of information: 
By \ref{4.2}(b), the only terms with  $\ell=0$ in the parametrix construction yielding  the symbol of $q$ are of there form  $a_{\fa-j} (h_\fm+\mu^\fm)^{-K}$ for some term $a_{\fa-j}$ in the asymptotic expansion of the symbol of $A$. 
For $\tr_E q(\tilde Bw-\tilde b_0;\mu)$,  \ref{4.2}(d) shows that there may be contributions from the Taylor expansion, leading to terms $(-\tilde b_0)^\ga\partial^\ga_w\tr_E (a_{\fa-j}(\tilde Bw) (h_\fm(\tilde Bw)+\mu^\fm)^{-K})/\ga!$. 
However, none of the derivatives must fall on $(h_\fm(\tilde Bw)+\mu^\fm)^{-K}$, for then we would have $\ell\not=0$. 
Hence $d$ does not contain factors involving terms $h_{\fm-j}$ from the asymptotic expansion of $h$ or their derivatives. 
The fact that $k$ equals $0$ moreover implies that the integral \eqref{4.12.4}, and hence the coefficient of $\mu^{-K}\ln |\mu|$, is independent of the choice of the auxiliary operator $H$. 
This proves statement (e) in 
Theorem \ref{thm_resolvent}: The fact that 
$$\ln(-\gl) = \ln(\mu^\fm) = \fm \ln \mu $$
causes a factor $1/\fm=1/\ord H$ in the expansion with respect to $-\gl$.  

Finally, one has to treat the remainder terms in the expansion of $(h_\fm+\mu^\fm)^{-K-\ell}$. The analysis is slightly more involved, but similar to the above, and also produces terms of the form \eqref{expansion-terms}.

\subsubsection*{Step 6. Correcting the expansion} 
We have  an expansion of $\Tr(R_gT_wA(H+\mu^\fm)^{-K})$ 
not only into powers of $\mu$ and $\ln \mu$, but also  in powers of $|\mu|$ times factors 
 of the form  $e^{i|\mu|^2\mu_l}g_l(\mu/|\mu|)$, $\mu_l\not=0$, or $h(\mu/|\mu|)$ that  are not of the form stated in Theorem \ref{thm_resolvent}. 
This is analogous to a phenomenon already observed by Grubb and Seeley. They excluded this possibility in  \cite[Lemma 2.3]{GS1}.
An argument along similar lines shows that the expansion only contains the terms listed in \eqref{resolvent_exp}. In particular, the terms involving factors $e^{i|\mu|^2\mu_l}$ for $\mu_l\not=0$  necessarily add up to zero.  

This last fact has also been established before in the context of the analysis of the zeta function  
$\Tr(R_gT_wAH^{-z})$ in \cite[Theorem 5]{SaSch3}.  

\subsubsection*{Step 7. Conclusion.} As the terms involving the eigenvalues $\mu_l\not=0$ all cancel and, moreover, only powers of $\mu$ and $\ln \mu$ occur, we see that we obtain the expansion \eqref{resolvent_exp}: 
In fact, we saw already how the terms $(c_j'\ln(-\gl) + c_j'')^{-K-j}$ arise.
As for the terms $c_j(-\gl)^{(2m+\fa-j)/\fm-K}$: they result from inserting the terms for $\mu_l=0$ in the stationary phase expansion (e.g. in \eqref{4.7.10} with $\hbar = r^{-2}|\mu|^{-2}$) into the corresponding integral \eqref{4.7.3}, noting that $\gk_1=2m$.
This completes  our sketch. \hfill $\Box$


\section{The Noncommutative Residue} \label{sect_residue} 


In Section \ref{sect_resolvent} we saw that  there is only one point, where logarithmic terms appear, namely in the analysis of the integral \eqref{4.12.4}. 
We also observed that, as a consequence of Proposition \ref{4.2}, the only possible coefficient functions of $(h_\fm(\tilde Bw)+\mu^\fm)^{-K}$ in the expansion of $\tilde q(\tilde Bw-\tilde b_0)$ are the functions 
\begin{eqnarray}\label{bq}
\frac{(-\tilde b_0)^\ga}{\ga!}  \partial_w^\ga \tr_E a_{\fa-j}(\tilde Bw)
\end{eqnarray}
for some choice of $j$ and $\ga$. In addition, the application of the stationary phase expansion may add further derivatives $A_{2J,1}(\gt',\partial_{\gt'})$ of order $2J$ on $a_{\fa-j}$ as we saw in \eqref{4.14.1} and \eqref{4.14.1a}.

Next,  Equation \eqref{homog_cdn} implies that a nonzero contribution to $\mu^{-K}\ln |\mu|$ will only arise if the homogeneity degree satisfies 
\begin{eqnarray}
\label{5.0.2}
\fa-j-2J+\gk_1 =0,  
\end{eqnarray}
with $\gk_1=2m_5$.

In order to understand the contribution to the noncommutative residue, we distinguish the cases where $\fa= \ord A=-2m_5$, $\fa<-2m_5$ or $\fa>-2m_5$. 
 
(i) Let $\fa=-2m_5$. According to \eqref{5.0.2}, the only possibility  for obtaining a nontrivial contribution to the $\mu^{-K}\ln |\mu|$-term then is when $j=0$ and $\ga=0$ in \eqref{bq} and, moreover, $J=0$.
Setting $k=\ell=J=0$ and $l=1$ in \eqref{4.14.1} and \eqref{4.14.1a} shows that the coefficient is 
\begin{eqnarray}\label{cont0}
\int_{\mathbb S^{2m_5-1}} \tr_E a_{-2m_5}(\tilde B\gt) dS. 
\end{eqnarray}
On the other hand, we know that $\tilde B$ is the identity on $(x_{n-m_5+1},p_{n-m_5+1},\ldots 
x_n,p_n)$ and that, on $\mathbb S^{2m_5-1}$ the other variables are zero. Hence 
\eqref{cont0} reduces to    
\begin{eqnarray}\label{cont1}
\int_{\mathbb S^{2m_5-1}} \tr_E a_{-2m_5}(\gt)\, dS
\end{eqnarray}
as asserted. 

(ii) The same argument shows that the residue term vanishes if $\ord A<-2m_5$. 

(iii) If $\ord A>-2m_5$, then we reconsider Equation \eqref{4.12.4}.
The standard application of the stationary phase expansion  does not yield explicit expressions; therefore we take another approach. 
We know that there will be no contribution to the $\mu^{-K}\ln|\mu|$-term unless $k=\ell=0$ in  \eqref{4.12.4}.

For $\gk_1 = 2m_5$ we have 
$$\mathbb S^{2m_5-1} =\{ \gt \in \mathbb S^{n'-1} : 
\gt' = (\gt_1, \ldots , \gt_{n'-2m_5})=0\}.
$$

We introduce a parametrization of a tubular neighborhood of $\mathbb S^{2m_5-1}$ in 
$\mathbb S^{n'-1}$ as follows: We first choose a parametrization 
$\Phi: D\subset \R^{2m_5-1}\to \mathbb S^{2m_5-1}$ (e.g. the usual spherical coordinates) 
and then define, for small $\gve>0$ and $W_\gve = [-\gve,\gve]^{n'-2m_5}$, the parametrization
$$F: W_\gve\times D\to \R^{n'}, \quad F(\gt',\go) = (\gt', \Phi(\go)\sqrt{1-|\gt'|^2}).$$
The associated measure is $(1-|\gt'|^2)^{m_5-1} d\gt'dS$ with the surface measure $dS$ of
$\mathbb S^{2m_5-1}$. 

 Up to terms that furnish $O(\mu^{-\infty})$ contributions, the integral in \eqref{4.12.4} with $k=0$, $l=1$ and $d$ in \eqref{bq} of homogeneity degree $\fd = \fa-j-|\ga|$  is
\begin{eqnarray}
\label{oscint}
\int_1^{|\mu|} \int_D\int_{W_\gve} e^{ir^2\skp{Q\gt',\gt'}} d(\gt',\Phi(\go)\sqrt{1-|\gt'|^2}) \, d\gt'dS\ r^{\fd+n'-1}\, dr,
\end{eqnarray}
where $Q={\diag} (\tilde \gl_1, \ldots, \tilde \gl_{n'-2m_5})$ with $\tilde \gl_j$ introduced after \eqref{eq:simple}.

The advantage of this representation is that we may apply to the inner integral the explicit version of the stationary phase expansion for quadratic exponentials with $\hbar=r^{-2}$, see  \cite[Theorem 3.13]{Z}: 
\begin{eqnarray*}
\lefteqn{\nonumber
\int e^{\frac ih \skp{Q\gt',\gt'}} d(\gt', \Phi(\go)\sqrt{1-|\gt'|^2}) \, d\gt'}\\
&=& \left(\frac{h}{2\pi}\right)^{\frac{n'-2m_5}2} \frac{e^{i\frac\pi4\sgn Q}}{|\det Q|^{1/2}}   
\left(
\sum_{J=0}^{N-1} \frac{h^J}{J!} \left( \frac{\skp{Q^{-1}D,D}}{2i}\right)^J d(0,\Phi(\go)) +O(h^N) \right).
\end{eqnarray*}
Since $\hbar=r^{-2}$,  this furnishes an expansion into decreasing powers of $r$, which we then insert into   \eqref{oscint}. 
When evaluating the integral over $[1,|\mu|]$, we will only get a nontrivial contribution to the $\mu^{-K}\ln |\mu|$-term, if the total $r$-power is $-1$. 
In the case at hand, the possible powers are $(\fd+n'-1) +(-n'+2m_5)-2J = \fa-j-|\ga|+2m_5-2J-1$. Hence only those (finitely many) terms, where
$$\fa-j-|\ga| -2J = -2m_5$$
can contribute. For these we obtain an explicit formula. 

A further simplification comes from the fact that for $f$ positively homogeneous of degree $-2m_5$ on $\R^n\setminus \{0\}$  we have 
$$\int_{\mathbb S^{2m_5-1}} f(\gt) \, dS=0$$
whenever $f$ is a derivative of a positively homogeneous function of degree $-2m_5+1$, see  \cite[Lemma 1.2]{FGLS}. 
When considering the derivatives $\partial_w^\ga \tr_E a_{\fa-j}(\tilde Bw)$, we can therefore focus on those that are in directions normal to $\mathbb S^{2m_5-1}$.

\section{Appendix}

\subsection*{The parameter-dependent  calculus of Grubb and Seeley}

In order to make the text more accessible, we recall some results from \cite[Section 1]{GS1}. 

Let $S$ be an open sector in $\mathbb C$ with vertex at the origin and 
let $p=p(x,\xi,\mu)$ be a smooth function on $\mathbb R^\nu\times \mathbb R^n\times
\overline{S}$, which is additionally holomorphic for $\mu\in S$. 

The symbol class $S^{m,0}(\mathbb R^\nu,\mathbb R^n;S)$ consists of all $p$    
for which 
\begin{eqnarray}\label{GS.1}%
\partial^j_tp(\cdot,\cdot, 1/t) \in S^{m+j} (\mathbb R^\nu\times \mathbb R^n) , \quad 1/t\in S, 
\end{eqnarray}
while  
$S^{m,d}(\mathbb R^\nu,  \mathbb R^n;S)$ consists of those $p$
for which 
$$\partial^j_t( t^dp(\cdot,\cdot, 1/t)) \in S^{m+j} (\mathbb R^\nu\times  \mathbb R^n), \quad 1/t\in S,$$
with uniform estimates  in  $S^{m+j}$ for $|t|\le 1$ and $\frac1t$ in closed 
subsectors of $S$.  

\begin{example}\label{ex.1}
If $a(x,\xi)$ is homogeneous of degree $m>0$ in $\xi$ for $|\xi|\ge1$ and smooth in $(x,\xi)$ on $\mathbb R^\nu\times \mathbb R^n$, and if $a(x,\xi)+\mu^m$ is invertible for $(x,\xi,\mu) \in \R^\nu\times\R^n\times \overline S$, then the symbol 
$p$ defined by 
$$p(x,\xi,\mu) = (a(x,\xi) +\mu^m)^{-1}$$
is an element of $ S^{0,-m}(\R^\nu, \R^n; S)\cap S^{-m,0}(\R^\nu, \R^n; S) $, see \cite[Theorem~1.17]{GS1}.  
\end{example}

\begin{theorem}\label{GrubbS2}
{\rm (a)} For $p_1\in S^{m_1,d_1}$ and $p_2\in S^{m_2,d_2}$ we have 
$p_1p_2\in S^{m_1+m_2,d_1+d_2}$. 
\\
{\rm (b)} For $p\in S^{m,d}$ set 
\begin{eqnarray}\label{exp_infty}%
p_{(d,k)} (x,\xi) = \frac1{k!} \partial^k_t(t^dp(x,\xi,\frac1t))_{|t=0}.
\end{eqnarray}
Then $p_{(d,k)}\in S^{m+k} $, and for any $N$, 
\begin{eqnarray}
p(x,\xi,\mu) -\sum_{k=0}^{N-1} \mu^{d-k} p_{(d,k)} (x,\xi) \in S^{m+N, d-N}.
\end{eqnarray}   
\end{theorem}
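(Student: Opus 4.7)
My approach for Theorem \ref{GrubbS2} is to read both parts directly off the definition of $S^{m,d}$. Part (a) is an immediate consequence of the Leibniz rule together with the multiplicative property $S^{m_1+i}\cdot S^{m_2+j-i}\subset S^{m_1+m_2+j}$ of standard H\"ormander classes. Part (b) is a quantitative Taylor expansion in the variable $t=1/\mu$ at $t=0$, where the residual class $S^{m+N,d-N}$ emerges naturally from estimating the integral remainder.

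For (a), I would write
\[t^{d_1+d_2}(p_1 p_2)(x,\xi,1/t) = \bigl(t^{d_1}p_1(x,\xi,1/t)\bigr)\bigl(t^{d_2}p_2(x,\xi,1/t)\bigr)\]
and apply $\partial_t^j$ via the Leibniz rule. Each factor $\partial_t^i(t^{d_1}p_1(\cdot,\cdot,1/t))$ lies in $S^{m_1+i}$ and $\partial_t^{j-i}(t^{d_2}p_2(\cdot,\cdot,1/t))$ in $S^{m_2+j-i}$ by hypothesis, uniformly for $|t|\le 1$ and $1/t$ in closed subsectors of $S$. Their product is therefore in $S^{m_1+m_2+j}$ with uniform estimates, and holomorphy in $\mu\in S$ is preserved by the product.

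For (b), fix $(x,\xi)$ and set $f(t)=t^d p(x,\xi,1/t)$. The hypothesis $\partial_t^k f \in S^{m+k}$ with uniform estimates for $|t|\le 1$ implies that $f$ extends smoothly in $t$ down to $t=0$, so $p_{(d,k)}(x,\xi) = f^{(k)}(0)/k! \in S^{m+k}$ is immediate. Taylor's theorem with integral remainder yields
\[f(t) = \sum_{k=0}^{N-1} \frac{t^k}{k!}f^{(k)}(0) + \frac{t^N}{(N-1)!}\int_0^1 (1-s)^{N-1} f^{(N)}(st)\,ds.\]
Substituting $t=1/\mu$ and multiplying by $\mu^d$ gives
\[p(x,\xi,\mu) - \sum_{k=0}^{N-1}\mu^{d-k} p_{(d,k)}(x,\xi) = \mu^{d-N} g(1/\mu),\]
with $g(t)=\frac{1}{(N-1)!}\int_0^1 (1-s)^{N-1}f^{(N)}(st)\,ds$. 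Writing $q(x,\xi,\mu)=\mu^{d-N}g(1/\mu)$ for the remainder, the identity $t^{d-N}q(x,\xi,1/t)=g(t)$ reduces the claim $q\in S^{m+N,d-N}$ to showing $\partial_t^j g \in S^{m+N+j}$. Differentiating under the integral produces $\partial_t^j g(t) = \frac{1}{(N-1)!}\int_0^1 (1-s)^{N-1} s^j f^{(N+j)}(st)\,ds$, and invoking the hypothesis at order $N+j$ supplies the desired uniform estimate.

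The only delicate point is confirming that all estimates are \emph{uniform} across closed subsectors of $S$. This reduces to the observation that if $1/t$ lies in a closed subsector of $S$ and $s\in(0,1]$, then $1/(st)$ lies in the same subsector (subsectors are cones through the origin), so $f^{(N+j)}(st)$ inherits the uniform $S^{m+N+j}$-bound from the defining property of $S^{m,d}$. With this in hand the rest is routine; the substantive content of the theorem is really the clean cancellation produced by the Taylor expansion and the exact drop in order $(m,d)\mapsto(m+N,d-N)$ that it forces on the remainder.
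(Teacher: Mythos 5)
Your proof is correct. The paper does not actually prove this theorem --- it is recalled without proof from Grubb--Seeley \cite[Section 1]{GS1} --- and your argument (Leibniz rule in $t$ for part (a), Taylor expansion with integral remainder in $t=1/\mu$ at $t=0$ for part (b), including the observation that closed subsectors are cones so that $1/(st)$ stays in the subsector and the uniform $S^{m+N+j}$-bounds apply to $f^{(N+j)}(st)$) is precisely the standard argument behind the cited result.
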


\begin{theorem}\label{parametrix}
Let $a\in S^m$, $m>0$, with principal symbol $a_m$ which is homogeneous of degree 
$m>0$ for $(x,\xi)$, $|\xi|\ge1$ and such that 
$a_m(x,\xi)+\mu^m$ is invertible for all $\mu$ in $S$.   
Then the standard parametrix construction furnishes a symbol $p=p(x,\xi,\mu)$ such that 
$$p\circ (a+\mu^m)  \sim 1 \sim (a+\mu^m)\circ p\text{\rm { mod }} 
S^{-\infty,-m}(\R^n, \R^n;S).$$
The symbol $p$ is an element of $S^{-m,0}\cap S^{0,-m}$ with an asymptotic 
expansion 
$$p\sim\sum_{j=0}^\infty p_{-m-j},$$
where
$
p_{-m}(x,\xi,\mu) = (a_m(x,\xi)+\mu^m)^{-1}  \in (S^{-m,0}\cap S^{0,-m})(\R^n, \R^n;S) $ 
and
$p_{-m-j} \in (S^{-m-j,0}\cap S^{m-j,-2m})(\R^n, \R^n;S)$.
\end{theorem}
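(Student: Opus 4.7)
The plan is to perform the standard pseudodifferential parametrix construction on the symbol side, while tracking both indices of the bi-graded Grubb-Seeley spaces $S^{m,d}(\R^n,\R^n;S)$ simultaneously. The principal term is
$$
p_{-m}(x,\xi,\mu)=(a_m(x,\xi)+\mu^m)^{-1},
$$
which lies in $S^{-m,0}\cap S^{0,-m}$ by Example \ref{ex.1}. For the lower-order terms I would expand $(a+\mu^m)\#p$ through the Leibniz composition formula
$$
(a+\mu^m)\# p \sim \sum_\alpha \frac{(-i)^{|\alpha|}}{\alpha!}\,\partial_\xi^\alpha(a+\mu^m)\,\partial_x^\alpha p,
$$
substitute $a\sim \sum_k a_{m-k}$ and $p\sim p_{-m}+p_{-m-1}+\dots$, and impose vanishing of each homogeneity class for $j\ge 1$. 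Noting that $\partial_\xi^\alpha(a+\mu^m)=\partial_\xi^\alpha a$ for $|\alpha|\ge 1$ and that $\mu^m$ only appears in the order-zero Leibniz term, one obtains the familiar recursion
$$
p_{-m-j}=-\,(a_m+\mu^m)^{-1}\sum_{\substack{k+l+|\alpha|=j\\ l<j}}\frac{(-i)^{|\alpha|}}{\alpha!}\,\partial_\xi^\alpha a_{m-k}\,\partial_x^\alpha p_{-m-l},\qquad j\ge 1.
$$

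The heart of the argument is an induction on $j$ showing that $p_{-m-j}\in S^{-m-j,0}\cap S^{m-j,-2m}$. The $\mu$-independent factor $\partial_\xi^\alpha a_{m-k}$ sits in $S^{m-k-|\alpha|,0}$ (as a classical symbol of that order, with $x$-derivatives preserving the order), and the inductive hypothesis gives $\partial_x^\alpha p_{-m-l}\in S^{-m-l,0}\cap S^{m-l,-2m}$. Two applications of the product rule of Theorem \ref{GrubbS2}(a) then place the product $\partial_\xi^\alpha a_{m-k}\,\partial_x^\alpha p_{-m-l}$ in $S^{-k-l-|\alpha|,0}\cap S^{2m-k-l-|\alpha|,-2m}$. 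Multiplying by the principal parametrix $(a_m+\mu^m)^{-1}\in S^{-m,0}$, using the \emph{same} representative for both of the intersections, yields $p_{-m-j}\in S^{-m-j,0}\cap S^{m-j,-2m}$, exactly the class asserted in the theorem.

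To finish, I would realize the formal series as a genuine symbol $p\in S^{-m,0}\cap S^{0,-m}$ by a Borel-type asymptotic summation performed with a single family of cut-off factors uniform in both scales. The right parametrix property $(a+\mu^m)\#p\sim 1$ modulo $S^{-\infty,-m}$ then follows because truncation at level $N$ cancels the first $N$ homogeneous terms by construction, while the Taylor-type remainder in the Leibniz formula, combined with Theorem \ref{GrubbS2}(b), places the residual error in $S^{-N,-m}$; letting $N\to\infty$ lands it in $S^{-\infty,-m}$. The left parametrix is constructed symmetrically, and a standard sandwich argument shows that it coincides with $p$ modulo $S^{-\infty,-m}$. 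I expect the main technical obstacle to be the coupled Borel summation: an ordinary asymptotic summation yields a symbol in only one scale at a time, and producing one element that controls the tails in both $S^{-m,0}$ and $S^{0,-m}$ simultaneously requires choosing cut-offs that dominate both families of defining seminorms uniformly on closed subsectors of $S$.
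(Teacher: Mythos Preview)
Your approach is essentially the paper's: the paper also writes down the standard recursion (there for the left parametrix, so with the factor $p_{-m}$ on the right rather than the left) and refers to \cite{GS1} for the verification of the symbol-class memberships and the asymptotic summation.

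One point in your induction needs care. The inductive hypothesis $p_{-m-l}\in S^{-m-l,0}\cap S^{m-l,-2m}$ is only available for $l\ge 1$; for $l=0$ one has $p_{-m}\in S^{-m,0}\cap S^{0,-m}$, and in fact $p_{-m}\notin S^{m,-2m}$ in general (compute $t^{-2m}(a_m+t^{-m})^{-1}=t^{-m}/(1+t^{m}a_m)$, which blows up as $t\to 0$). So the summands with $l=0$ in your recursion must be handled separately: there the inner product $\partial_\xi^\alpha a_{m-k}\cdot\partial_x^\alpha p_{-m}$ lies only in $S^{m-j,-m}$, and it is the \emph{outer} factor $(a_m+\mu^m)^{-1}\in S^{0,-m}$ (not $S^{-m,0}$, as you wrote) that pushes the total into $S^{m-j,-2m}$. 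The paper sidesteps this bookkeeping by unwinding the recursion completely: every $p_{-m-j}$ with $j\ge 1$ is a finite linear combination of products
\[
p_{-m}\,(\partial_\xi^{\alpha_1}\partial_x^{\beta_1}a_{m-k_1})\,p_{-m}\cdots p_{-m}\,(\partial_\xi^{\alpha_r}\partial_x^{\beta_r}a_{m-k_r})\,p_{-m}
\]
with $r\ge 1$, so at least \emph{two} copies of $p_{-m}$ appear, and the membership in $S^{m-j,-2m}$ follows at once from two applications of $p_{-m}\in S^{0,-m}$ and Theorem~\ref{GrubbS2}(a).
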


In fact, the parametrix is constructed  by letting 
\begin{eqnarray*}%
p_{-m} (x,\xi,\mu) &=&(a_m(x,\xi)+\mu^m)^{-1}\quad\text{and} \\
p_{-m-j} (x,\xi,\mu)
&=&- \sum_{l,k,\ga}\frac1{\ga!}  (\partial_\xi^{\ga}p_{-m-l} ) 
(\partial_x^{\ga} a_{m-k} )p_{-m},
\end{eqnarray*}
where the sum is over all $k,l,\ga$ with $l<j$ and $l+k+|\ga|=j$. 
In fact, $p_{-m-j}$ is a linear combination of terms 
$p_{-m}(\partial_\xi^{\ga_1}\partial_x^{\gb_1} a_{m-k_1} ) 
\ldots 
p_{-m}(\partial_\xi^{\ga_r}\partial_x^{\gb_r} a_{m-k_r} )p_{-m}$, 
where, for $j>0$, we have $r\ge 1$ and 
$
\sum_{l=1}^r|\ga_l| =\sum_{l=1}^r |\gb_l| 
\text{ and } \sum_{l=1}^r|\ga_l|+\sum_{l=1}^r|k_l|=j .
$

\bibliographystyle{amsplain}

\end{document}